\theoremstyle{definition}
  \newtheorem{definition}[subsection]{Definition}
  \newtheorem{notation}[subsection]{Notation}
  \newtheorem{definition-proposition}[subsection]{Definition-Proposition}
  \newtheorem{example}[subsection]{Example}
  \newtheorem{remark}[subsection]{Remark}
\theoremstyle{theorem}
  \newtheorem{theorem}[subsection]{Theorem}
  \newtheorem*{proposition*}{Proposition}
  \newtheorem{lemma}[subsection]{Lemma}
  \newtheorem{corollary}[subsection]{Corollary}
  \newtheorem{conjecture}[subsection]{Conjecture}
\newcommand{\maxx}{{\mathrm{max}}}
\newcommand{\Cbb}{\mathbb{C}}
\newcommand{\Nbb}{\mathbb{N}}
\newcommand{\Qbb}{\mathbb{Q}}
\newcommand{\Rbb}{\mathbb{R}}
\newcommand{\Sbb}{\mathbb{S}}
\newcommand{\Zbb}{\mathbb{Z}}
\newcommand{\Cbf}{\mathbf{C}}
\newcommand{\Hbf}{\mathbf{H}}
\newcommand{\Gbf}{\mathbf{G}}
\newcommand{\ibf}{\mathbf{i}}
\newcommand{\Nbf}{\mathbf{N}}
\newcommand{\Pbf}{\mathbf{P}}
\newcommand{\Tbf}{\mathbf{T}}
\newcommand{\Ubf}{\mathbf{U}}
\newcommand{\Vbf}{\mathbf{V}}
\newcommand{\Wbf}{\mathbf{W}}
\newcommand{\Gfrak}{{\mathfrak{G}}}
\newcommand{\Xfrak}{\mathfrak{X}}
\newcommand{\Yfrak}{\mathfrak{Y}}
\newcommand{\Hscr}{\mathscr{H}}
\newcommand{\Pscr}{\mathscr{P}}
\newcommand{\Sscr}{\mathscr{S}}
\newcommand{\Ccal}{\mathcal{C}}
\newcommand{\Rcal}{\mathcal{R}}
\newcommand{\Ascr}{{\mathscr{A}}}
\newcommand{\Mscr}{{\mathscr{M}}}
\newcommand{\Yscr}{{\mathscr{Y}}}
\newcommand{\der}{\mathrm{der}}
\newcommand{\ra}{\rightarrow}
\newcommand{\lra}{\longrightarrow}
\newcommand{\mono}{\hookrightarrow}
\newcommand{\epim}{{\twoheadrightarrow}}	
\newcommand{\isom}{\cong}
\newcommand{\limproj}{\varprojlim}
\newcommand{\GSp}{\mathrm{GSp}}
\newcommand{\Res}{\mathrm{Res}}
\newcommand{\GL}{{\mathbf{GL}}}
\newcommand{\bsh}{\backslash}
\newcommand{\inv}{{-1}}
\newcommand{\ot}{\overset}
\newcommand{\wrt}{{with\ respect\ to}\xspace}
\newcommand{\ifof}{{if\ and\ only\ if}\xspace}
\newcommand{\cosg}{{compact\ open\ subgroup}\xspace}
\newcommand{\cosgs}{{compact\ open\ subgroups}\xspace}
\newcommand{\Qac}{\bar{\mathbb{Q}}}
\newcommand{\Gal}{\mathrm{Gal}}
\newcommand{\supp}{\mathrm{supp}}
\newcommand{\mult}{{\mathbb{G}_\mathrm{m}}}
\newcommand{\adele}{{\hat{\mathbb{Q}}}}
\newcommand{\Qbbp}{{\mathbb{Q}_p}}
\newcommand{\TW}{{{(\mathbf{T},w)}}}
\title{Bounded equidistributionof special subvarieties I}
\begin{document}

\maketitle

\tableofcontents

\section*{Introduction to the main results}

In this paper, we study the equidistribution of certain families of special subvarieties in a general mixed Shimura variety, and  the Andr\'e-Oort conjecture for these varieties, as a generalization of \cite{clozel ullmo} and \cite{ullmo yafaev}.

The notion of mixed Shimura varieties is developed in \cite{brylinski} and \cite{pink thesis}. They serve as moduli spaces of mixed Hodge structures, and often arise as boundary components of toroidal compactification of pure Shimura varieties. Among mixed Shimura varieties there are Kuga varieties, cf. \cite{chen kuga}, which are certain "universal" abelian scheme over Shimura varieties, and in general, a mixed Shimura variety can be realized as a torus bundle over some Kuga varieties (namely a torsor whose structure group is a torus). Similar to the pure case, we have the notion of special subvarieties in mixed Shimura varieties.

Y. Andr\'e and F. Oort conjectured that the Zariski closure of a sequence of special subvarieties in a given pure Shimura variety remains a finite union of special subvarieties, cf. \cite{andre note}. R. Pink proposed a generalization of this conjecture for mixed Shimura varieties by combining it with the Manin-Mumford conjecture and the Mordell-Lang conjecture for abelian varieties:

\begin{conjecture}[Pink, \cite{pink conjecture}]

Let $M$ be a mixed Shimura variety, and let $(M_n)_n$ be a sequence of special subvarieties. Then the Zariski closure of $\bigcup_nM_n$ is a finite union of special subvarieties.

\end{conjecture}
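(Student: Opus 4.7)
The plan is to adapt the equidistribution strategy of Clozel-Ullmo \cite{clozel ullmo} and Ullmo-Yafaev \cite{ullmo yafaev} for pure Shimura varieties to the mixed setting, exploiting Pink's description \cite{pink thesis} of a mixed Shimura variety $M$ as a torus bundle over a Kuga variety \cite{chen kuga}, which is in turn an abelian scheme over a pure Shimura variety. The overall structure is induction on $\dim M$, with the pure case (conditional on GRH, via \cite{ullmo yafaev} and \cite{klingler yafaev}) serving as the base, and the conjecture for the Manin-Mumford-type fiber direction building on \cite{ratazzi ullmo}.

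First I would attach to each $M_n$ a canonical probability measure $\mu_n$, coming from the Haar measure on the Mumford-Tate group of its defining Shimura subdatum modulo an arithmetic lattice. The theorem of Mozes-Shah \cite{mozes-shah} on unipotent flows then guarantees that, after extracting a subsequence, $\mu_n \to \mu_\infty$ weakly, where $\mu_\infty$ is the canonical measure of some special subvariety $M_\infty$. The containment of each $\supp \mu_n$ in the Zariski closure of $\bigcup_n M_n$ is automatic; the content of the conjecture is to upgrade weak convergence to eventual Zariski containment $M_n \subseteq M_\infty$, which would immediately imply that the Zariski closure is a finite union of special subvarieties.

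Second, to convert this measure-theoretic statement into an algebro-geometric one, use Galois orbits. For $M_n$ defined over a number field $F_n$, the orbit $\Gal(\Qac/F_n)\cdot M_n$ consists of conjugate special subvarieties carrying the same canonical measure; averaging produces a $\Gal(\Qac/F_n)$-invariant probability measure $\nu_n$. A sufficiently strong lower bound on the orbit size, combined with the Mozes-Shah limit applied to $\nu_n$, should force each $M_n$ to be contained in $M_\infty$ for all $n$ large enough, thereby completing the induction step.

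The main obstacle is producing these Galois orbit lower bounds in the mixed setting: one has to simultaneously control the field of definition of $M_n$ over the base pure Shimura variety (an Ullmo-Yafaev type bound, conditional on GRH) and along the abelian and toric fibers (a Masser-Ratazzi-Ullmo type bound, as in \cite{ratazzi ullmo}). The compatibility of these two regimes---in particular how the arithmetic lattice acts on torsion sections and how Shimura reciprocity intertwines with the fibered structure provided by Pink's theorem---is the most delicate point, and is presumably the subject of the companion paper \cite{chen bounded 2}.
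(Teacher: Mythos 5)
The statement you are proving is labeled a \emph{conjecture} in the paper (Pink's conjecture), and the paper does not prove it: its main theorem is the much weaker statement that the conclusion holds for sequences of special subvarieties that are \emph{bounded}, i.e.\ $\TW$-special for $\TW$ ranging over a fixed finite set $B$, and even that is proved purely by ergodic methods, with the Galois-theoretic half of the strategy deferred to the companion paper. Your proposal is therefore not a proof but an outline of the known Klingler--Ullmo--Yafaev strategy, and the two places where the real difficulty sits are exactly the places you leave open. First, the Mozes--Shah step does not by itself give what you claim: weak convergence $\mu_n\to\mu_\infty$ of canonical measures only forces the limit support to be a homogeneous subspace attached to some $\Qbb$-group of type $\Hscr$; to know that this limit is again the (S-space of a) special subvariety of the right kind, and to get the support-convergence statement $\supp\mu_n\subset\supp\mu_\infty$ for large $n$, one needs the family of measures under consideration to be \emph{closed} in the weak topology. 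That closedness is precisely what the paper's boundedness hypothesis (the finite set $B$ of pairs $\TW$) buys, via the recovery lemma for $\TW$-special subgroups; for an arbitrary sequence of special subvarieties it fails, which is why Clozel--Ullmo and Ullmo--Yafaev need the ergodic/Galois alternative at all. A further technical point you gloss over: for non-Kuga mixed Shimura varieties the canonical probability measures do not live on the special subvarieties themselves (the toric fibers are non-compact) but on their S-spaces, which is why the paper introduces that notion.

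Second, the entire Galois half of your argument is asserted rather than proved. The lower bound on Galois orbits in the mixed setting, the compatibility between the Ullmo--Yafaev-type bound on the pure base and the torsion-section/toric directions, and above all the step ``a sufficiently strong lower bound combined with the limit measure should force $M_n\subseteq M_\infty$'' --- which in the pure case is the hard geometric argument of Edixhoven--Yafaev/Klingler--Yafaev involving Hecke correspondences and is conditional on GRH --- are all missing; you yourself defer them to \cite{chen bounded 2} and to GRH. Deferring the essential ingredients to other (partly conjectural) work means the proposal does not establish the statement; at best it reproduces the program of which the present paper proves only the bounded, ergodic-theoretic piece.
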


Remarkable progresses have been made for the Andr\'e-Oort conjecture in the pure case, cf. \cite{klingler yafaev}, \cite{scanlon bourbaki}, \cite{yafaev bordeaux}. In this paper, we generalize part of the  strategy of Klingler-Ullmo-Yafaev to the mixed case. The main ingredients of the strategy in the pure case can be expressed as the following ''ergodic-Galois'' alternative:

\begin{itemize}

\item equidistribution of special subvarieties with bounded Galois orbits (using ergodic theory), cf. \cite{clozel ullmo} and \cite{ullmo yafaev};

\item for a sequence of special subvarieties $(M_n)$ of unbounded Galois orbits, one can construct a new sequence of special subvarieties  $(M_n')$ such that $\bigcup_nM_n$ have the same Zariski closure as $\bigcup_nM_n'$ does, and that $\dim M_n<\dim M_n'$ for $n$ large enough.

\end{itemize} Note that both ingredients involves estimations that rely on the GRH (Generalized Riemann Hypothesis).

Our main result is the following:

\begin{theorem} Let $M$ be a mixed Shimura variety, and let $M_n$ be a bounded sequence of special subvarieties in $M$.  Then the Zariski closure of $\bigcup_n M_n$ is a finite union of special subvarieties.
\end{theorem}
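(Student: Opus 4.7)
The plan is to upgrade the Zariski-closure statement to a measure-theoretic statement about weak limits of canonical probability measures, and apply a Mozes--Shah type rigidity theorem for unipotent flows, extending the strategy of Clozel--Ullmo (\cite{clozel ullmo}) and Ullmo--Yafaev (\cite{ullmo yafaev}) from the pure to the mixed case.

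First I would attach to each special subvariety $M_n$ its \emph{canonical probability measure} $\mu_n$: writing $M_n$ as the image in $M$ of a connected mixed Shimura sub-datum $(\mathbf{P}_n, X_n^+)$ at some adelic Hecke translate, $\mu_n$ is the push-forward of the normalized Haar-type measure on the arithmetic quotient attached to $\mathbf{P}_n$. The assumption that the sequence $(M_n)$ is \emph{bounded} should be interpreted as saying that the defining data $(\mathbf{P}_n, X_n^+, \text{Hecke translate})$ lie in a relatively compact subset modulo the natural action of $\mathbf{P}(\mathbb{Q})$ and of a fixed level subgroup; in particular the family $\{\mu_n\}$ is tight on $M(\mathbb{C})$, and by Prokhorov some subsequence $\mu_{n_k}$ converges weakly to a probability measure $\mu_\infty$.

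Next I would invoke the Mozes--Shah theorem in the form needed here: the weak limit $\mu_\infty$ is again the canonical measure $\mu_{M_\infty}$ of some special subvariety $M_\infty \subseteq M$, and one has the strong absorption property $M_{n_k} \subseteq M_\infty$ for $k$ large. Applying this extraction to every subsequence of $(M_n)$ and using noetherianity of Zariski-closed subsets of $M$, one obtains finitely many special subvarieties $M_\infty^{(1)}, \ldots, M_\infty^{(r)}$ such that every $M_n$ lies in some $M_\infty^{(j)}$; conversely each $M_\infty^{(j)}$ arises as the support of a limiting measure and therefore lies in the Zariski closure of $\bigcup_n M_n$. This gives the desired equality of the Zariski closure with $\bigcup_j M_\infty^{(j)}$.

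The main obstacle is the extension of the Mozes--Shah absorption statement to the mixed setting. In the pure adjoint case it follows from Ratner-theoretic rigidity for unipotent-generated real Lie subgroups; but a mixed Shimura datum $(\mathbf{P}, X)$ carries a non-trivial weight filtration $W \subseteq U \subseteq \mathbf{P}$ with unipotent radical and a reductive (non-semisimple) torus quotient, and one must disentangle the behaviour of the measures along this filtration. A natural strategy is to fiber $M$ over its pure Shimura base, run the argument on the pure quotient using \cite{clozel ullmo} and \cite{ullmo yafaev}, then handle in turn the torus fibers of the intermediate Kuga step and the unipotent fibers above. At each stage one has to verify that the boundedness hypothesis descends and that the limiting subgroup is defined over $\mathbb{Q}$, which is where the arithmetic input (finiteness of class numbers, cf.\ \cite{conrad class number}) enters.
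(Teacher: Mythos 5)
There is a genuine gap, and it sits exactly at the point where the hypothesis of the theorem is supposed to do its work. In this paper ``bounded'' has a precise meaning: the $M_n$ are $\TW$-special for $\TW$ ranging over a fixed \emph{finite} set $B$ of pairs $(\Tbf,w)$ with $\Tbf$ a $\Qbb$-torus in $\Gbf$ and $w\in\Wbf(\Qbb)$. You reinterpret it as relative compactness of the defining data, use it only to get tightness of $\{\mu_n\}$, and then ``invoke the Mozes--Shah theorem in the form needed here: the weak limit is again the canonical measure of some special subvariety with absorption.'' That is not what Mozes--Shah gives. It gives that the limit is a homogeneous measure attached to some real subgroup, which after the rationality/type-$\Hscr$ analysis is the lattice space of a $\Qbb$-subgroup $\Wbf'\rtimes\Hbf'$ of type $\Hscr$; such a group is in general \emph{not} the derived group of a mixed Shimura subdatum, so the limit support need not be special. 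The step that upgrades it is precisely where $B$-boundedness enters: since all $\Pbf_n^\der$ come from $\TW$-special subdata for the \emph{same} pair $(\Tbf,w)$, the generated limit group is centralized by $w\Tbf w^\inv$, and $\Pbf':=\Wbf'\rtimes w\Tbf\pi(\Hbf')w^\inv$ is shown (together with the recovery lemma \ref{recovery}) to underlie a $\TW$-special subdatum whose canonical measure is the limit; one then reduces $B$ to a single pair by finiteness of $B$. Your argument never uses the finiteness of the set of pairs $(\Tbf,w)$ at all, so as written it would ``prove'' Pink's conjecture for arbitrary sequences, which the ergodic method alone cannot do.

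A second, smaller but real, omission concerns where the measures live. When $\Ubf\neq 0$ a special subvariety has non-compact fibers (torsors under $\Gamma_\Ubf\bsh\Ubf(\Cbb)\cong(\Cbb^\times)^d$ over the Kuga part), so there is no canonical probability measure supported on $M_n$ itself. The paper works instead with the S-space $\Mscr_n=\Gamma\bsh\Yscr_n^+$, the real part with compact torus fibers $\Gamma_\Ubf\bsh\Ubf(\Rbb)$, proves compactness and support convergence for $\Pscr_\TW(\Mscr)$ (using a compact set $K_\TW$ met by every $\TW$-special S-subspace), and then recovers the Zariski statement from the lemma that S-spaces are Zariski dense in the corresponding special subvarieties. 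Your pushforward measure is in fact supported on this real-analytic S-subspace, so your absorption statement $M_{n_k}\subseteq M_\infty$ needs the density lemma to make sense at the level of algebraic subvarieties; your closing suggestion to ``handle the torus fibers and unipotent fibers in turn'' gestures at this but does not supply it, whereas the paper avoids any stepwise fibration argument by applying Mozes--Shah directly to the lattice space of $\Pbf^\der$ (which is of type $\Hscr$ by Lemma \ref{commutator}) and pushing forward along the orbit map.
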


Here bounded sequences means a sequence of special subvarieties that are $\TW$-special for finitely many pairs $\TW$, where $\TW$-special subvarieties is the analogue of $\Tbf$-special subvarieties in the mixed case, cf. the pure case in \cite{ullmo yafaev}.


We briefly explain the main idea of the paper. A mixed Shimura datum in the sense of \cite{pink thesis} is of the form $(\Pbf,\Ubf,Y)$ with $\Pbf$ a connected linear $\Qbb$-group, with a Levi decomposition $\Pbf=\Wbf\rtimes\Gbf$, $\Ubf$ a normal unipotent $\Qbb$-subgroup of $\Pbf$ central in $\Wbf$, and $Y$ a complex manifold homogeneous under $\Ubf(\Cbb)\Pbf(\Rbb)$ subject to some algebraic constraints. From mixed Shimura data we can define mixed Shimura varieties and their special subvarieties just like the case of pure Shimura varieties.  When $\Ubf=0$, we get Kuga data and Kuga varieties, cf. \cite{chen kuga}.

Similar to the pure case studied in \cite{clozel ullmo} and \cite{ullmo yafaev}, we first consider the Andr\'e-Oort conjecture for sequences of $\TW$-special subvarieties in a mixed Shimura variety $M$ defined by $(\Pbf,Y)$. Here $\Tbf$ is a $\Qbb$-torus in $\Gbf$ and $w$ an element of $\Wbf(\Qbb)$. Using the language of \cite{chen kuga} 2.10 etc, in a Kuga variety $M=\Gamma_\Vbf\rtimes\Gamma_\Gbf\bsh Y^+$ defined by $(\Pbf,Y)=\Vbf\rtimes(\Gbf,X)$,  $\TW$-special subvarieties are defined by subdata of the form $\Vbf'\rtimes(w\Gbf'w^\inv,wX')$, and they are obtained  from diagrams of the form $$\xymatrix{ M'\ar[r]^\subset & M_{S'}\ar[r]^\subset \ar[d]^\pi & M \ar[d]^\pi\\ & S'\ar[r]^\subset & S}$$
where \begin{itemize}
  \item $S=\Gamma_\Gbf\bsh X^+$ is a pure Shimura variety and $\pi:M\ra S$ is an abelian $S$-scheme defined by the natural projection $(\Pbf,Y)\ra(\Gbf,X)$;

  \item $S'$ is a (pure) special subvariety of $S$ defined by $(\Gbf',X')$ with $\Tbf$ equal to the connected center of $\Gbf'$,

  \item $M_{S'}$ is the abelian $S'$-scheme pullded-back from $M\ra S$, and $\Vbf'$ is a subrepresentation in $\Vbf$ of $\Gbf$, corresponding to an abelian subscheme $A'$ of $M_{S'}\ra S'$;

  \item $M'$ is a translation of $A'$ by a torsion section of $M_{S'}\ra S'$ given by $v$.
\end{itemize} In particular, this notion is more restrictive than $\Tbf$-special subvarieties studied in \cite{chen kuga}.

The case in general mixed Shimura varieties is similar. We show that certain spaces of probability measures on $M$ associated to $\TW$-special subvarieties are compact for the weak topology, from which we deduce the equidistribution of the supports of such measures, as well as the Andr\'e-Oort conjecture for such sequences of $\TW$-special subvarieties.

We  propose a notion of $B$-bounded sequences of special subvarieties, which means special subvarieties that are $\TW$-special when $\TW$ comes from some prescribed finite set $B$ of pairs $\TW$ as above. The main result of \cite{ullmo yafaev} shows in the pure case that a sequences with bounded Galois orbits is $B$-bounded for some $B$. 
The mixed case of this characterization is treated in \cite{chen bounded 2}


\bigskip

The paper is organized as follows:

In Section 1, we recall the basic notions of mixed Shimura data, their subdata, mixed Shimura varieties, their special subvarieties, as well as their connected components. We do not follow the original presentation in \cite{pink thesis} of the notion of mixed Shimura data; rather we use an equivalent formulation which is more convenient in our case, based on a fibration of a mixed Shimura variety over a pure Shimura variety, whose fibers are torus bundles over abelian varieties.

In Section 2, we  introduce some measure-theoretic objects, such as  lattice (sub)spaces,   S-(sub)spaces, and canonical probability measures associated to special subvarieties in mixed Shimura varieties. The notion of S-spaces is only involved for mixed Shimura varieties that are not Kuga varieties. We realize S-spaces as some real analytic subspaces of the corresponding special subvarieties, dense for the Zariski topology. They are fibred over pure Shimura varieties, and the fibers are compact, which allows us to define canonical probability measures supported by them.  We also introduce the notion of $B$-bounded sequence of special subvarieties, where $B$ is a finite set of pairs of the form $(\Tbf,w)$ explained above.

In Section 3, we prove the equidistribution of bounded sequences of special lattice subspaces and special S-spaces  . The proof is reduced to the case when the bound $B$ consists of a single element $\TW$, and the arguments are completely parallel to the pure $\Cbf$-special case in \cite{clozel ullmo} and \cite{ullmo yafaev}. The equidistribution of $B$-bounded S-subspaces implies the Andr\'e-Oort conjecture for a $B$-bounded sequence  of special subvarieties in a mixed Shimura variety.




\bigskip

\footnote: the author is partially supported by National Key Basic Research Program of China, No. 2013CB834202, the Grant of Chinese Universities Project WK0010000029, and the Youth Fund Project () of National Science Foundation of China.

\section*{notations and conventions}

Over a base field $k$, a linear $k$-group $\Hbf$ is a smooth affine algebraic $k$-group scheme, and $\Tbf_\Hbf$ is the connected center of $\Hbf$, namely the neutral component of the center of $\Hbf$.

We write $\Sbb$ for the Deligne torus $\Res_{\Tbf/\Rbb}\mult_\Cbb$. The ring of finite adeles is denoted by $\adele$. $\ibf$ is a fixed square root of -1 in $\Cbb$.

A linear $\Qbb$-group is compact if its set of $\Rbb$-points form a compact Lie group. For $\Pbf$ a linear $\Qbb$-group with maximal reductive quotient $\Pbf\epim\Gbf$, we write $\Pbf(\Rbb)^+$ resp. $\Pbf(\Rbb)_+$ for the preimage of $\Gbf(\Rbb)^+$ resp. of $\Gbf(\Rbb)_+$, in the sense of \cite{deligne pspm}. $\Pbf(\Rbb)^+$ is just the neutral component of the Lie group $\Pbf(\Rbb)$.

For a real or complex analytic space (not necessarily smooth), its archimedean topology is the one locally deduced from the archimedean metric on $\Rbb^n$ or $\Cbb^m$.

For $\Hbf$ a linear $\Qbb$-group, we write $\Xfrak(\Hbf)$ for the set of $\Rbb$-group homomorphisms $\Sbb\ra\Hbf_\Rbb$, and $\Yfrak(\Hbf)$ for the set of $\Cbb$-group homomorphisms $\Sbb_\Cbb\ra\Hbf_\Cbb$. We have the natural action of $\Hbf(\Rbb)$ on $\Xfrak(\Hbf)$ by conjugation, and similarly the action of $\Hbf(\Cbb)$ on $\Yfrak(\Hbf)$. In particular, we have an inclusion $\Xfrak(\Hbf)\mono\Yfrak(\Hbf)$, equivariant \wrt the inclusion $\Hbf(\Rbb)\mono\Hbf(\Cbb)$.

\section{preliminaries on (fibred) mixed Shimura varieties}

The readers are referred to \cite{pink thesis} for generalities on mixed Shimura data and mixed Shimura varieties. In our case, we mainly consider mixed Shimura data (or varieties) fibred over some pure Shimura data (or varieties). Therefore it is more convenient to use the following variant:

\begin{definition}[fibred mixed Shimura data, cf. \cite{chen kuga} 2.5]\label{fibred mixed Shimura data}

 A fibred mixed Shimura datum is a tuple $(\Gbf,X;\Ubf,\Vbf,\psi)$ as follows:

(a) a pure Shimura data $(\Gbf,X)$; in particular, $X$ is a $\Gbf(\Rbb)$-orbit in $\Xfrak(\Gbf)$ subject to some algebraic constraints.

(b) two finite dimensional algebraic representations of linear $\Qbb$-groups  $\rho_U:\Gbf\ra\GL_\Ubf$ and $\rho_\Vbf:\Gbf\ra\GL_\Vbf$, $\Ubf$ and $\Vbf$ being viewed as vectorial $\Qbb$-groups, such that for any $x\in X$, the composition $\rho_\Ubf\circ x$ defines a rational pure Hodge structure of type $\{(-1,-1)\}$, and that $\rho_\Vbf\circ x$ defines a rational pure Hodge structure of type $\{(-1,0),(0,-1)\}$.

(c) a $\Gbf$-equivariant alternative bilinear map $\psi:\Vbf\times\Vbf\ra\Ubf$; it is equivalently given by a central extension of unipotent linear $\Qbb$-group $$1\ra\Ubf\ra\Wbf\ra\Vbf\ra 1,$$ respecting the $\Gbf$-actions as the Lie bracket $[\ ,\ ]:\Wbf\times\Wbf\ra\Wbf$ factors through $2\psi:\Vbf\times\Vbf\ra\Ubf$.

(d) The action of the connected center $\Tbf_\Gbf$ on $\Vbf$ and on $\Ubf$ factors through $\Qbb$-tori isogeneous to products of the form $\Hbf\times\mult^d$, where $\Hbf$ is a compact $\Qbb$-torus.

(e) We also require $\Gbf$ to be minimal, in the sense that if $\Hbf\subset\Gbf$ is a $\Qbb$-subgroup such that $x(\Sbb)\subset\Hbf_\Rbb$ for all $x\in X$ then $\Hbf=\Gbf$.

\bigskip

The classical mixed Shimura datum in the sense of \cite{pink thesis} associated to the tuple above is $(\Pbf,Y)$ where $\Pbf=\Wbf\rtimes\Gbf$ is given by the action of $\Gbf$ on $\Wbf$ (via $\rho_\Ubf$ and $\rho_\Vbf$), and $Y$ is the $\Ubf(\Cbb)\Wbf(\Rbb)$-orbit of $X$ in $\Yfrak(\Pbf)$. In particular, $\Wbf$ is the unipotent radical of $\Pbf$,  $\Ubf$ is the center of $\Wbf$, and $Y$ is fibred over $X$ by $\Ubf(\Cbb)\Wbf(\Rbb)$..

We thus write $(\Pbf,Y)=(\Ubf,\Vbf)\rtimes(\Gbf,X)$ for the fibred mixed Shimura datum, with $(\Gbf,X)$ referred to as its pure base, and we often write $\Wbf=(\Ubf,\Vbf)$ as the unipotent radical.
\end{definition}

\begin{remark}\label{remarks on mixed shimura data}

(1) In the definition above, one can also follow  \cite{pink thesis} for a slightly different notion of pure Shimura datum $(\Gbf,X)$. The connected mixed Shimura varieties thus obtained is the same as the ones defined by \ref{fibred mixed Shimura data}, cf. \cite{chen kuga} Remark 2.2(1).

(2) The condition (d)  in included as it was required in \cite{pink thesis} to establish the existence of canonical models, which we need when considering Galois orbits of special subvarieties.

(3) The condition (e) is included to simplify some formulations. Removing this condition does not cause any difference on the space $X$ nor the mixed Shimura varieties thereby defined. It is used in \cite{klingler yafaev} and \cite{ullmo yafaev}, whose approach we follow closely.

(4) One can also show that $\Gbf^\der$ acts on $\Ubf$ trivially via $\rho_\Ubf$, cf. \cite{pink thesis} 2.16.

(5) We briefly outline how a fibred mixed Shimura datum can be constructed out of a classical mixed Shimura dautm. By definition (\cite{pink thesis} 2.1), if $(\Pbf,Y)$ is a classical mixed Shimura datum, then for any $y\in Y$, the homomorphism $y:\Sbb_\Cbb\ra(\Pbf/\Ubf)_\Cbb$ deduced from $y$ is already defined over $\Rbb$. Since the image of $y$ is a $\Cbb$-torus, take a $\Ubf(\Cbb)\Wbf(\Rbb)$-conjugation, we may assume that $y$ has image in  $\Gbf_\Cbb$, for some Levi $\Qbb$-subgroup $\Gbf$ of $\Pbf$. Then $y$ is defined over $\Rbb$, and factors as $y:\Sbb\ra\Gbf_\Rbb\ra\Pbf_\Rbb$. Put $X=\Gbf(\Rbb)\cdot y\subset Y$ the orbit under $\Gbf(\Rbb)$, one can verify that $(\Gbf,X)$ is a pure Shimura datum, and that the Levi decomposition $\Pbf=\Wbf\rtimes\Gbf$ induces an equality $Y=\Ubf(\Cbb)\Wbf(\Rbb)\cdot X$ so that $(\Pbf,Y)$ is the classical mxied Shimura datum associated to $(\Ubf,\Vbf)\rtimes(\Gbf,X)$.

\end{remark}

\begin{definition}[morphisms of mixed Shimura data]\label{morphism of mixed shimura data}

(1) A morphism between fibred mixed Shimura is of the form $(f,f_*):(\Ubf,\Vbf)\rtimes(\Gbf,X)\ra(\Ubf',\Vbf')\rtimes(\Gbf',X')$, where $f$ means a homomorphism $f:\Gbf\ra\Gbf'$ together with equivariant homomorphism $\Vbf'\ra\Vbf', \Ubf\ra\Ubf'$ such that $f_*:\Xfrak(\Gbf)\ra\Xfrak(\Gbf'),\ x\mapsto f\circ x$ sends $X$ into $X'$.

From a morphism between fibred data $(f,f_*)$ as above we immediately get a morphism between classical mixed Shimura data: the $\Qbb$-group homomorphism $\Pbf\ra\Pbf'$  is the evident one, denoted still by $f$; the induced map $f_*:\Xfrak(\Gbf)\ra\Xfrak(\Gbf')$ extends to $\Yfrak(\Pbf)\ra\Yfrak(\Pbf')$ using unipotent extensions by $\Wbf$ and $\Wbf'$ respectively, and it sends the $\Ubf(\Cbb)\Wbf'(\Rbb)$-orbit of $X$ into the $\Ubf'(\Cbb)\Wbf(\Rbb)$-orbit of $X'$, hence the required equivariant map $Y\ra Y'$

\bigskip

(2) A subdatum of $(\Pbf,Y)$ is given by a morphism of mixed Shimura data $(\Ubf',\Vbf')\rtimes(\Gbf',X')\ra(\Ubf,\Vbf)\rtimes(\Gbf,X)$ such that the corresponding maps $\Pbf':=\Wbf'\rtimes\Gbf'\ra\Pbf=\Wbf\rtimes\Gbf$ and $Y':=\Ubf'(\Cbb)\Wbf'(\Rbb)X'\ra Y=\Ubf(\Cbb)\Wbf(\Rbb)X$ are both inclusions.

One can verify that if $(f,f_*): (\Pbf',Y')\ra(\Pbf,Y)$ is a morphism of mixed Shimura data, then $(f(\Pbf'),f_*(Y'))$ is a subdatum of $(\Pbf,Y)$, which we call the image of $(f,f_*)$.

\bigskip

(3) For $(\Pbf,Y)=(\Ubf,\Vbf)\rtimes(\Gbf,X)$ a mixed Shimura datum, and $\Nbf$ a normal unipotent $\Qbb$-subgroup of $\Pbf$ (necessarily contained in $\Wbf$), the (unipotent) quotient $(\Pbf,Y)/\Nbf$ is the datum $(\Pbf',Y')=(\Ubf',\Vbf')\rtimes(\Gbf,X)$, where $\Ubf'=\Ubf/(\Nbf\cap\Ubf)$ and $\Vbf'=(\Wbf/\Nbf)/\Ubf'$, and $Y'$ is the quotient manifold of $Y$ by the free action of $(\Nbf\cap\Ubf)(\Cbb)\Nbf(\Rbb)$.

The unipotent quotient here can be formulated differently, see \cite{pink thesis} 2.9 and 2.18.
\bigskip

(4) A fibred Kuga datum is defined as a a fibred mixed Shimura datum $(\Ubf,\Vbf)\rtimes(\Gbf,X)$ such that $\Ubf=0$.  Similarly, a pure Shimura datum is a (fibred) mixed Shimura datum such that $\Ubf=0=\Vbf$.

It is then clear that unipotent quotient by $\Ubf$ and by $\Vbf$ successively gives a two-step fibration $$(\Pbf, Y)\ra(\Pbf/\Ubf, Y/\Ubf(\Cbb))\ra(\Pbf/\Wbf, Y/\Ubf(\Cbb)\Wbf(\Rbb))$$ where the middle term is Kuga, and the last term is the maximal pure quotient.

It is also clear that the subdata of a pure Shimura datum are pure, the subdata of a Kuga datum are Kuga (including the pure ones), etc. as one can verify directly using the Hodge types.

\bigskip

(5) In particular we have the inclusion $(\Gbf,X)\subset(\Pbf,Y)$, which we call the pure section given by the Levi decomposition $\Pbf=\Wbf\rtimes\Gbf$. For any $w\in\Wbf(\Qbb)$, we have a pure subdatum $(w\Gbf w^\inv, wX)$ in $(\Pbf,Y)$ isomorphic to $(\Gbf,X)$; all the maximal pure subdata of $(\Pbf,Y)$ are of this form, due to the Levi decomposition $\Pbf=\Wbf\rtimes\Gbf$.
\end{definition}

We often omit the adjective "fibred" if no confusion is caused.

\begin{notation}\label{group law} We also fix notations for the group law in $\Pbf=\Wbf\rtimes\Gbf$. $\Wbf$ is isomorphic to $\Ubf\times\Vbf$ as $\Qbb$-scheme. Then the group law writes as \begin{itemize}
  \item multiplication $(u,v)\times(u',v')=(u+u'+\psi(v,v'),v+v')$;

  \item inverse $(u,v)^\inv=(-u,-v)$;

  \item commutator $(u,v)(u',v')(-u,-v)(-u',-v')=(2\psi(v,v'),0)$.

 \end{itemize}
 Elements in $\Pbf=\Wbf\rtimes\Gbf$ are written as $(u,v,g)$, with neutral element $(0,0,1)$, and we have \begin{itemize}
   \item multiplication $(u,v,g)(u',v',g')=(u+g(u')+\psi(v,g(v')),v+g(v'),gg')$;
   \item inverse $(u,v,g)^\inv=(-g^\inv(u) , -g^\inv(v), g^\inv)$, namely $(w,g)^\inv=(g^\inv(w^\inv),g^\inv)$ for $w=(u,v)$
   \item and the commutator between $\Wbf$ and $\Gbf$ is $(u,v,1)(0,0,g)(-u,-v,1)(0,0,g^\inv)=(u-g(u),v-g(v),1)$

 \end{itemize}
 where we write $g(u)=gug^\inv=\rho_\Ubf(g)(u)$ and similarly for $g(v)$, $g(w)$.
\end{notation}

\begin{definition-proposition}[description of subdata, cf. \cite{chen kuga} Proposition 2.6, 2.10]\label{description of subdata}

Let $(\Pbf,Y)=(\Ubf,\Vbf)\rtimes(\Gbf,X)$ be a mixed Shimura datum. Then a subdatum of $(\Pbf,Y)$ is of the form $(\Pbf',Y')=(\Ubf',\Vbf')\rtimes(w\Gbf'w^\inv, wX)$ where \begin{itemize}

\item $(\Gbf',X')\subset(\Gbf,X)$ is a pure Shimura subdatum;

\item $\Ubf'$ resp. $\Vbf'$ is a subrepresentation of $\rho_U$ resp. of $\rho_\Vbf$ restricted to $\Gbf'$ such that $\psi(\Vbf'\times\Vbf')\subset\Ubf'$;

\item $w\in\Wbf(\Qbb)$ conjugates $(\Gbf',X')$ into a pure subdatum of $(\Pbf,Y)$, where $w X'$ is the translate of $X'$ in $Y$ by $w$; $w\Gbf'w^\inv$ acts on $\Ubf'$ and $\Vbf'$ through $\Gbf'$.

\end{itemize}
\end{definition-proposition}

It is also clear that maximal pure subdata of $(\Pbf,Y)$ are of the form $(w\Gbf w^\inv, wX)$, and we call them pure sections of $(\Pbf,Y)$ (\wrt the natural projection).

\begin{proof}[sketch of the proof]
The idea  is  the same as the Kuga case in \cite{chen kuga} Proposition 2.10.

 \begin{itemize}

  \item Following \ref{remarks on mixed shimura data}(5), we see that $(\Pbf',Y')$ is isomorphic to $(\Ubf',\Vbf')\rtimes(\Gbf_1,X_1)$ for some pure Shimura datum $(\Gbf_1,X_1)$, and $(\Gbf_1,X_1)$ extends to a maximal pure subdatum of $(\Pbf,Y)$, which is given by a Levi $\Qbb$-subgroup of $\Pbf$. Thus $(\Gbf_1,X_1)$ is a pure subdatum of  $(w_1\Gbf w_1^\inv, w_1X)$ for some $w_1\in\Wbf(\Qbb)$.

  \item  The restrictions on Hodge types shows that $\Ubf'\subset\Ubf$ and $\Vbf'\subset\Vbf$, and they are subrepresentations of $\Gbf_1$ in $\Ubf$ and $\Vbf$ respectively.

  \item Finally, using the notations in \ref{group law} one verifies directly that the action of $w\Gbf w^\inv$ on $\Wbf$ is the same as the action of $\Gbf$, i.e. $$(w g(w^\inv),g)(w',1)(wg(w^\inv),g)^\inv=(g(w),1).$$
\end{itemize}
\end{proof}

\begin{remark}[pure sections]\label{remak on pure sections}
In terms of classical mixed Shimura data, we have a commutative diagram of morphisms $$\xymatrix{ (\Pbf',Y')\ar[d] \ar[r]^\subset & (\Pbf,Y)\ar[d]\\ (\Gbf',X')\ar[r]^\subset & (\Gbf,X)}$$ where the horizontal arrows are inclusions of subdata, and the vertical ones are natural projections. The pure section $(\Gbf,X)\mono(\Pbf,Y)$ does not restrict to  a pure section of $(\Pbf',Y')$ but we can extend a pure section of $(\Pbf',Y')$ to a maximal pure subdatum of $(\Pbf,Y)$, which is a pure section conjugate to $(\Gbf,X)$.
\end{remark}

\begin{definition}[mixed Shimura varieties, cf. \cite{pink thesis} 3.1]\label{mixed shimura varieties}

Let $(\Pbf,Y)=(\Ubf,\Vbf)\rtimes(\Gbf,X)$ be a mixed Shimura datum, and let $K\subset\Pbf(\adele)$ a \cosg. The (complex) mixed Shimura variety associated to $(\Pbf,Y)$ at level $K$ is the quotient space $$M_K(\Pbf,Y)(\Cbb)=\Pbf(\Qbb)\bsh[ Y\times\Pbf(\adele)/K]\isom\Pbf(\Qbb)_+\bsh[ Y^+\times\Pbf(\adele)/K]$$ where the last equality makes sense for any connected component $Y^+$ of $Y$ because $\Pbf(\Qbb)_+$ equals the stabilizer of $Y^+$ in $\Pbf(\Qbb)$.

Using the finiteness of class number  in \cite{conrad class number}, we see that the double quotient $\Pbf(\Qbb)_+\bsh\Pbf(\adele)/K$ is finite. Write $\Rcal$ for a set of representative of it, we then have $$M_K(\Pbf,Y)=\coprod_{a\in\Rcal}\Gamma_K(a)\bsh Y^+$$ with $\Gamma_K(a)=\Pbf(\Qbb)_+\cap aKa^\inv$ a congruence subgroup of $\Pbf(\Rbb)_+$.

Pink has shown that such double quotient are normal quasi-projective varieties over $\Cbb$, generalizing  a theorem of Baily and Borel. He also develops the theory of canonical models to the pure case. In this paper, we treat mixed Shimura varieties as algebraic varieties over $\Qac$, and we denote them as $M_K(\Pbf,Y)$. When we need to mention a canonical model over some field of definition $F$, we will write $M_K(\Pbf,Y)_F$, which is NOT the base change to some ''smaller'' base field. We apologize for the abuse of notations.

Kuga varieties resp. pure Shimura varieties are mixed Shimura varieties associated to Kuga data resp. pure Shimura data.

\end{definition}

Since the Andr\'e-Oort conjecture is insensitive to changing the level $K$, we will mainly work with levels $K$ that are neat, see \cite{pink thesis} Introduction (page 5). Mixed Shimura varieties at neat levels are smooth.

\begin{theorem}[canonical model, cf. \cite{pink thesis} Chapter 11] The double quotient $M_K(\Pbf,Y)$ admits a canonical model over the reflex field of $(\Pbf,Y)$, unique up to a unique isomorphism.

\end{theorem}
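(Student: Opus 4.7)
The plan is to follow Deligne's strategy for pure Shimura varieties (extended by Pink to the mixed setting) and to propagate canonicity up the two-step fibration
\[
(\Pbf,Y)\lra(\Pbf/\Ubf,Y/\Ubf(\Cbb))\lra(\Gbf,X)
\]
from Definition--Proposition \ref{description of subdata}. First I would set up the reflex field: attach to each $y\in Y$ the cocharacter $\mu_y\colon\mult_\Cbb\to\Pbf_\Cbb$ obtained by restricting $y\colon\Sbb_\Cbb\to\Pbf_\Cbb$ to the first factor of $\Sbb_\Cbb=\mult\times\mult$, and let $E=E(\Pbf,Y)$ be the field of definition of the $\Pbf(\Cbb)$-conjugacy class $\{\mu_y\}$. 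I then characterize a canonical model by its behavior on special points: given a pair $(\Tbf,y)$ with $\Tbf\subset\Pbf$ a $\Qbb$-torus and $y$ factoring through $\Tbf_\Rbb$, the Galois action of $\Gal(\Qac/E(y))$ on $[y,a]\in M_K(\Pbf,Y)(\Cbb)$ must be given by the reciprocity morphism built from $\mu_y$ composed with the action of $\Tbf(\adele)/(\Tbf(\Qbb)\cap K)$ on the fiber.

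Next I would construct the model in three layers. The pure base $S=M_{K_\Gbf}(\Gbf,X)$ admits a canonical model over $E(\Gbf,X)$ by Deligne--Milne--Shih, so I would import this as a black box. For the Kuga layer $(\Pbf/\Ubf,Y/\Ubf(\Cbb))=(\Vbf,0)\rtimes(\Gbf,X)$, I would invoke the moduli interpretation: under the Hodge type conditions in Definition \ref{fibred mixed Shimura data}(b) and condition (d), this quotient is an abelian (or more generally semi-abelian) $S$-scheme arising from the variation of Hodge structure of type $\{(-1,0),(0,-1)\}$ on $\Vbf$, and its canonical model over $E(\Pbf/\Ubf,Y/\Ubf(\Cbb))$ is obtained by descending the associated family along the canonical model of $S$, using a rigidification provided by the level $K$. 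Finally, for the full $(\Pbf,Y)$, I would use the fact that the projection to $(\Pbf/\Ubf,Y/\Ubf(\Cbb))$ is a torus torsor whose structure is encoded by the $\Gbf$-equivariant form $\psi\colon\Vbf\times\Vbf\to\Ubf$; this identifies the total space with a Poincar\'e-type biextension, whose descent to the reflex field follows from descending $\psi$ (which is already $\Qbb$-rational) together with the descended abelian scheme.

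Uniqueness then follows from the density of special points: using the formalism of \ref{description of subdata}, every subdatum $(\Pbf',Y')$ contains enough special $(\Tbf,y)$, and their Galois orbits prescribed by the reciprocity law form a Zariski dense set in each component of $M_K(\Pbf,Y)$. Two models over $E$ agreeing on this set must then agree, and any isomorphism between them is rigidified by its action on a single special point.

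The hard part, as in Pink's treatment, is the mixed layer: one must verify that the descent data constructed fiberwise for the $\Ubf(\Cbb)$-torsor are compatible with the Galois action on the Kuga base and with change of level, and that the resulting $E$-scheme is functorial in morphisms $(f,f_*)$ of mixed Shimura data (Definition \ref{morphism of mixed shimura data}). Handling this compatibility requires the reciprocity law to interact correctly with the commutator formulas of Notation \ref{group law}, in particular with the dependence of the group law on $\psi$; ensuring canonicity through this twist, rather than just constructing some $E$-model, is the central technical point and is where one must cite Pink \cite{pink thesis} Chapter 11 rather than reprove from scratch.
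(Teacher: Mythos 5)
There is nothing in the paper to compare your argument against: the statement is imported verbatim from \cite{pink thesis}, Chapter 11, and the paper explicitly uses it as a black box (all it needs afterwards is that the reflex field is a number field and that the morphisms of \ref{morphisms of mixed Shimura varieties} are defined over the reflex fields). Since your proposal also ends by citing Pink for the genuinely hard step, in substance you are making the same move as the paper, just with an expository outline of what Pink's construction looks like attached.

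Taken on its own terms, your outline is a fair description of the standard strategy (reflex field via the conjugacy class of $\mu_y$, characterization of canonicity by the reciprocity law at special points, construction layer by layer along $(\Pbf,Y)\ra(\Pbf/\Ubf,Y/\Ubf(\Cbb))\ra(\Gbf,X)$, uniqueness from density of special points), but be aware that several steps are glossier than you suggest and are exactly where the cited chapter does its work. Existence for the pure base is not just Deligne--Milne--Shih but needs the later existence results in full generality; the Kuga layer is an abelian scheme over $S$ only for suitable (e.g.\ product-type, neat) levels, and otherwise only a torsor, so the ``rigidification provided by $K$'' needs care; and descending the $\Ubf$-layer is not achieved by descending $\psi$ alone --- one must descend a $\mult$-torsor (equivalently a line bundle/biextension datum), which Pink handles via embeddings into standard mixed Shimura data with a moduli interpretation and functoriality, rather than by fiberwise descent. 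Finally, uniqueness requires knowing that special points of the mixed Shimura variety (torsion-type points over special points of the base) are dense and that the reciprocity law pins down the Galois action there; this is true but is itself part of Pink's formalism. None of this makes your proposal wrong --- you flag the crux and cite \cite{pink thesis} for it --- but as a standalone proof it is incomplete precisely at the point the paper, too, chooses to cite rather than prove.
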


In this paper, it suffices to know that the reflex field is a number field embedded in $\Cbb$, and that the following morphisms are functorially defined with respect to the canonical models over the reflex fields:

\begin{definition}[morphisms of mixed Shimura varieties and Hecke translates, cf. \cite{pink thesis} 4]\label{morphisms of mixed Shimura varieties}

 (1) Let $f:(\Pbf,Y)\ra(\Pbf',Y')$ be a morphism of mixed Shimura data, with \cosgs $K\subset\Pbf(\adele)$ and $K'\subset\Pbf'(\adele)$ such that $f(K)\subset K'$, then there exists a unique morphism $M_K(\Pbf,Y)\ra M_{K'}(\Pbf',Y')$ of mixed Shimura varieties  whose evaluation over $\Cbb$-points is simply $[x,aK]\mapsto[f_*(x), f(a)K']$. It is actually defined over the composite of the reflex fields of these data.
\bigskip

(2) Let $(\Pbf,Y)$ be a mixed Shimura datum, and $g\in\Pbf(\adele)$. For $K\subset\Pbf(\adele)$ a \cosg, there exists a unique isomorphism of mixed Shimura varieties $M_{gKg^\inv}(\Pbf,Y)\ra M_K(\Pbf,Y)$ which is $$[x,agKg^\inv]\mapsto[x,agK]$$ at the level of $\Cbb$-points. It is actually defined over the reflex field of the datum.

\bigskip

For $(\Pbf,X)=(\Ubf,\Vbf)\rtimes(\Gbf,X)$, the natural projection $\pi:(\Pbf,Y)\ra(\Gbf,X)$ gives the natural projection onto the pure Shimura variety $$\pi:M_K(\Pbf,Y)\ra M_{\pi(K)}(\Gbf,X).$$ We can refine this projection into $$M_K(\Pbf,Y)\ot{\pi_\Ubf}\lra M_{\pi_\Ubf(K)}(\Pbf/\Ubf,Y/\Ubf(\Cbb))\ot{\pi_\Vbf} \lra M_{\pi(K)}(\Gbf,X)$$ as $\pi=\pi_\Wbf=\pi_\Vbf\circ\pi_\Ubf$, and the sequence means that a general mixed Shimura variety is fibred over some Kuga variety.
\end{definition}

We also introduce an auxiliary condition of the \cosg $K$:

\begin{definition}[levels of product type]\label{levels of product type}

Let $(\Pbf,Y)=(\Ubf,\Vbf)\rtimes(\Gbf,X)$ be a fibred mixed Shimura datum. A \cosg $K$ of $\Pbf(\adele)$ is said to be of product type, if it is of the form $K=K_\Wbf\rtimes K_\Gbf$ for \cosgs $K_\Wbf\subset\Wbf(\adele)$, $K_\Gbf\subset\Gbf(\adele)$, with $K_\Wbf$ the central extension of a \cosg $K_\Vbf\subset\Vbf(\adele)$ by a cosg $K_\Ubf\subset\Ubf(\adele)$ through the restriction of $\psi$; $K_\Ubf$ and $K_\Vbf$ are required to be stabilized by $K_\Gbf$.

Furthermore, a \cosg $K$ in $\Pbf(\adele)$ is said to be of strong product type if \begin{itemize}
 
\item (a)  it is of product type and $K=\prod_p K_p$ for \cosgs $K_p\subset\Pbf(\Qbbp)$ for all rational prime $p$, such that for some $\wp$ prime, $K_\wp$ is neat;
    
\item (b) we also require that $K_\Gbf=K_{\Gbf^\der}K_\Cbf$ where $\Cbf$ is the connected center of $\Gbf$, with \cosgs $K_{\Gbf^\der}\subset\Gbf^\der(\adele)$ and $K_\Cbf\subset\Cbf(\adele)$ both of strong product type in the sense of (a).
\end{itemize}
In this case we also write $K_p=K_{\Wbf,p}\rtimes K_{\Gbf,p}$
and $K_?=\prod_p K_{?,p}$ for $?\in\{\Ubf,\Vbf,\Wbf,\Gbf,\Pbf\}$.
\end{definition}


\begin{remark}[two-step fibration, cf. \cite{pink thesis} ?.?]\label{two step fibration}
In this case, $\pi(K)=K_\Gbf$, and we have an evident morphism $\iota(0): M_{K_\Gbf}(\Gbf,X)\mono M_K(\Pbf,Y)$, which we called the zero section of the (fibred) mixed Shimura variety defined by $(\Pbf,Y)=(\Ubf,\Vbf)\rtimes(\Gbf,X)$. It can be refined into $$ M_K(\Pbf,Y)\ot{\pi_\Ubf}\lra M_{K_\Vbf\rtimes K_\Gbf}(\Vbf\rtimes(\Gbf,X))\ot{\pi_\Vbf}\lra M_{K_\Gbf}(\Gbf,X)$$ where $\pi_\Vbf$ is an abelian scheme with zero section $\pi_\Ubf\circ \iota(0)$, and $\pi_\Ubf$ is a torsor under $\Gamma_\Ubf\bsh\Ubf(\Cbb)$ which is the algebraic torus of character group $\Gamma_\Ubf$.  The extension $\Wbf$ of $\Vbf$ by $\Ubf$ splits \ifof the torsor admits a section given by a morphism between mixed Shimura varieties.

\end{remark}

\begin{example}

Let $\Vbf$ be a finite-dimensional $\Qbb$-vector space, equipped with a symplectic form $\psi:\Vbf\times\Vbf\ra\Qbb(-1)$ where $\Qbb(-1)$ is the $\Qbb$-vector space $\Qbb\dfrac{1}{2\pi\ibf}$. From the symplectic $\Qbb$-group $\GSp_\Vbf=\GSp(\Vbf,\psi)$, we obtain the pure Shimura datum $(\GSp_\Vbf,\Hscr_\Vbf)$, with $\Hscr_\Vbf$ the Siegel double space associated to $(\Vbf,\psi)$. We refer to it as the Siegel datum associated to $(\Vbf,\psi)$. When $(\Vbf,\psi)$ is the standard symplectic structure on $\Qbb^{2g}$, we simply write it as $(\GSp_{2g},\Hscr_g)$.

For any $x\in\Hscr_\Vbf$, the standard representation $\rho_\Vbf:\GSp_\Vbf\ra\GL_\Vbf$ defines a rational Hodge structure $(\Vbf,\rho_\Vbf\circ x)$ of type $\{(-1,0),(0,-1)\}$, hence we get the Kuga-Siegel datum $\Vbf\rtimes(\GSp_\Vbf,\Hscr_\Vbf)$.

The symplectic form defines a central extension $\Wbf$ of $\Vbf$ by $\Qbb(-1)$, and it is easy to verify that $(\Pbf_\Vbf,Y_\Vbf):=(\Qbb(-1),\Vbf)\rtimes(\GSp_\Vbf,\Hscr_\Vbf)$ is a mixed Shimura datum fibred over the Siegel datum.

Assume that for some $\Zbb$-lattice $\Gamma_\Vbf$ of $\Vbf$, the restriction $\psi:\Gamma_\Vbf\times\Gamma_\Vbf\ra\Qbb(-1)$ has value in $\Zbb(-1)$ and is of discriminant $\pm1$. The lattices $\Gamma_\Vbf\subset\Vbf$ and $\Zbb(-1)\subset\Qbb(-1)$ generates \cosgs $K_\Vbf$ and $K_\Ubf$ respectively. Take a \cosg $K_\Gbf\subset\GSp_\Vbf(\adele)$ small enough and stabilizing both  $K_\Vbf$ and $K_\Ubf$, we get the mixed Shimura variety $M_K(\Pbf_\Vbf,Y)$ for $(\Pbf,Y)=(\Pbf_\Vbf,Y_\Vbf)$ and $K=K_\Wbf\rtimes K_\Gbf$, $K_\Wbf$ is the \cosg generated by $K_\Ubf$ and $K_\Vbf$. We also have the universal abelian scheme over the Siegel moduli space of level $K_\Gbf$, namely $$\Ascr=M_{K_\Vbf\rtimes K_\Gbf}(\Vbf\rtimes\GSp_\Vbf,\Vbf(\Rbb)\Hscr_\Vbf)\ra \Sscr=M_{K_\Gbf}(\GSp_\Vbf,\Hscr_\Vbf)$$ and $M_K(\Pbf,Y)$ is a $\mult$-torsor over $\Ascr$.

The \cosgs thus obtained are also levels of product type.

\end{example}

\begin{definition}[special subvarieties]\label{special subvarieties}

Let $(\Pbf,Y)$ be a mixed Shimura datum, with $M=M_K(\Pbf,Y)$ a mixed Shimura variety associated to it.

(1) The map $\wp_\Pbf:Y\times\Pbf(\adele)/K\ra M(\Cbb),\ (y,aK)\mapsto [y,aK]$ is called the (complex) uniformization map of $M$. It is clear that the target is not connected, but its connected components are simply connected complex manifolds isomorphic to each other.

A special subvariety of $M_K(\Pbf,Y)$ is a priori a subset of $M(\Cbb)$ of the form $\wp_\Pbf( Y'^+\times aK)$ with $a\in\Pbf(\adele)$ and $Y'^+$ is a connected component of some mixed Shimura subdatum $(\Pbf',Y')\subset(\Pbf,Y)$.

A special subvariety is actually a closed algebraic subvariety of $M_K(\Pbf,Y)$: it is a connected component of the image of the morphism $M_{K'}(\Pbf',Y')\ra M_{aKa^\inv}(\Pbf,Y)$ under the Hecke translate $M_{aKa^\inv}(\Pbf,Y)\isom M_K(\Pbf,Y)$; here $K'=\Pbf'(\adele)\cap aKa^\inv$.

(2) In Section 2 and 3, we will often with connected mixed Shimura varieties defined as follows:

\begin{itemize}

\item a connected mixed Shimura datum is of the form $(\Pbf,Y;Y^+)$ where $(\Pbf,Y)$ is a mixed Shimura datum and $Y^+$ a connected component of $Y$; a connected mixed Shimura subdatum is of the form $(\Pbf',Y';Y'^+)\subset(\Pbf,Y;Y^+)$ with $Y'^+\subset Y^+$

\item a connected mixed Shimura variety is a quotient space of the form $M^+=\Gamma\bsh Y^+$ where $\Gamma\subset\Pbf(\Qbb)_+$ is a congruence subgroup; such quotients are normal quasi-projective algebraic varieties defined over a finite extension of the reflex field of $(\Pbf,Y)$, and we treat them as varieties over $\Qac$;

\item for a connected mixed Shimura variety $M^+$ as above we have the (complex) uniformization map $\wp_\Gamma: Y^+\ra M^+$, and a special subvariety of $M^+$ is a subset of the form $\wp_\Gamma(Y'^+)$ given by some connected mixed Shimura subdatum $(\Pbf',Y';Y'^+)$; special subvarieties are closed irreducible algebraic subvarieties defined over $\Qac$, with a canonical model defined over some number field.

\end{itemize}

For example, in the Kuga case $(\Pbf,Y)=\Vbf\rtimes(\Gbf,X)$,  we have explained in the Introduction that special subvarieties are certain torsion subschemes of abelian schemes pulled-back from $S=\Gamma_\Gbf\bsh X^+$ to some pure special subvariety $S'\subset S$ (and the conditons involving $\TW$ describes some finer properties of the special subvariety).


\end{definition}









\section{measure-theoretic constructions associated mixed Shimura varieties}

In this section, we introduce some measure-theoretic constructions  associated to connected mixed Shimura varieties. Most of them are analogue to the Kuga case discussed in \cite{chen kuga} Section 2, 2.14-2.18, except that in the general case, we work with the notion of S-spaces. We also introduce the notion of $\TW$-special subdata, which are analogue of $\Tbf$-special subdata in the mixed case.

\begin{definition}[lattice spaces and canonical measures]\label{lattice spaces and canonical measures}

(1) A linear $\Qbb$-group $\Pbf$ is said to be of type $\Hscr$ if it is of the form $\Pbf=\Wbf\rtimes\Hbf$ with $\Wbf$ a unipotent $\Qbb$-group and $\Hbf$ a connected semi-simple $\Qbb$-group without normal $\Qbb$-subgroups $\Hbf'\subset\Hbf$ of dimension $>0$ such that $\Hbf'(\Rbb)$ is compact.

We show in the lemma \ref{commutator} below that for a mixed Shimura datum $(\Pbf,Y)$, the $\Qbb$-group of commutators  $\Pbf^\der$ is of type $\Hscr$.

(2) For $\Pbf$ a linear group of type $\Hscr$ and $\Gamma\subset\Pbf(\Rbb)^+$ a congruence subgroup, the quotient $\Omega=\Gamma\bsh \Pbf(\Rbb)^+$ is called the (connected) lattice space associated to $(\Pbf,\Gamma)$. Since $\Gamma$ is discrete in $\Pbf(\Rbb)^+$, the space $\Omega$ is a smooth manifold. We also have the uniformization map $\wp_\Gamma:\Pbf(\Rbb)^+\ra\Omega,\ a\mapsto\Gamma a$.

(3) Let $\Omega=\Gamma\bsh\Pbf(\Rbb)^+$ be a lattice space as in (2). The left Haar measure $\nu_\Pbf$ on $\Pbf(\Rbb)^+$ passes to a measure $\nu_\Omega$ on $\Omega$: choose a fundamental domain $F\subset\Pbf(\Rbb)^+$ \wrt $\Gamma$, we put $\nu_\Omega(A)=\nu_\Pbf(F\cap\wp_\Gamma^\inv A)$ for $A\subset\Omega$ measurable.

Following \cite{chen kuga} 2.15 (1), $\nu_\Omega$ is of finite volume and is normalized such that $\nu_\Omega(\Omega)=1$.

\end{definition}

\begin{lemma}\label{commutator} Let $(\Pbf,Y)=(\Ubf,\Vbf)\rtimes(\Gbf,X)$ be a mixed Shimura datum. Then

(1) $\Pbf^\der=\Wbf\rtimes\Gbf^\der$ is of type $\Hscr$, where $\Wbf$ is the central extension of $\Vbf$ by $\Ubf$;

(2) $\Gbf^\der$ acts on $\Ubf$ trivially.

\end{lemma}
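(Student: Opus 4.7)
The plan is to treat (2) first via a Mumford--Tate-style scalars argument, and then use analogous Hodge-type vanishings to prove (1). In both parts, the minimality axiom (e) of \ref{fibred mixed Shimura data} supplies the crucial global input, while the prescribed Hodge types in (b) supply the local (at each $x\in X$) input.

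For (2), I would observe that for every $x\in X$, since $(\Ubf,\rho_\Ubf\circ x)$ is pure of Hodge type $\{(-1,-1)\}$, the morphism $\rho_\Ubf\circ x:\Sbb\to\GL_{\Ubf,\Rbb}$ factors through the scalar subgroup $\mult\cdot\id_\Ubf\subset\GL_\Ubf$ (as a power of the norm $\Sbb\to\mult$ followed by the inclusion of scalars). Setting $\Mbf:=\rho_\Ubf(\Gbf)\subset\GL_\Ubf$ and $\Hbf:=\rho_\Ubf^{-1}(Z(\Mbf))\subset\Gbf$, the scalars lie in $Z(\Mbf)$, so $x(\Sbb)\subset\Hbf_\Rbb$ for every $x\in X$. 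The minimality axiom (e) then forces $\Hbf=\Gbf$, hence $\Mbf$ is abelian and $\rho_\Ubf(\Gbf^\der)=\Mbf^\der=1$.

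For (1), I would split the statement into identifying $\Pbf^\der=\Wbf\rtimes\Gbf^\der$ and verifying the type-$\Hscr$ condition. The containment $\Pbf^\der\subset\Wbf\cdot\Gbf^\der$ is immediate from $\Pbf^\ab\epim\Gbf^\ab$ (the image of $\Pbf^\der$ in $\Gbf=\Pbf/\Wbf$ lies in $\Gbf^\der$), and $\Gbf^\der\subset\Pbf^\der$ is tautological, so the heart of the matter is $\Wbf\subset\Pbf^\der$. Here I would establish the Hodge vanishings $\Vbf^\Gbf=0=\Ubf^\Gbf$ by noting that a $\Gbf$-invariant subspace inherits trivial $x(\Sbb)$-action for every $x\in X$, hence carries a sub-Hodge structure of type $\{(0,0)\}$, which is incompatible with the ambient type $\{(-1,0),(0,-1)\}$ (resp.\ $\{(-1,-1)\}$). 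Reductivity of $\Gbf$, together with semisimplicity of its finite-dimensional $\Qbb$-representations, then upgrades these vanishings to $\Vbf=(\rho_\Vbf(\Gbf)-1)\Vbf$ and $\Ubf=(\rho_\Ubf(\Gbf)-1)\Ubf$. Via the commutator formulas in \ref{group law}, $(\rho_\Ubf(g)-1)u$ is itself a commutator in $\Pbf$, giving $\Ubf\subset\Pbf^\der$, and $(\rho_\Vbf(g)-1)v$ is the $\Vbf$-component of such a commutator, giving $\Wbf\subset\Pbf^\der\cdot\Ubf=\Pbf^\der$. Finally, $\Wbf$ is unipotent by construction and $\Gbf^\der$ has no positive-dimensional normal $\Qbb$-subgroup of compact real type, equivalently $\Gbf^\ad$ has no $\Qbb$-simple factor of compact type, which is among the standard axioms imposed on the pure Shimura datum $(\Gbf,X)$ in (a). The main obstacle I anticipate is the careful articulation of the Hodge-vanishing step $\Vbf^\Gbf=\Ubf^\Gbf=0$; everything else is formal manipulation of commutators, semidirect products, and adjoint quotients.
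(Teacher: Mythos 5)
Your proof is correct and follows essentially the same route as the paper, which for (1) just notes (citing \cite{chen kuga}) that $\rho_\Ubf$ and $\rho_\Vbf$ admit no trivial subrepresentations because of the Hodge-type constraints, and for (2) cites \cite{pink thesis} 2.16; your scalar-plus-minimality argument for (2) and the commutator bookkeeping for (1) simply make those citations self-contained. One cosmetic remark: the commutator of $(0,v,1)$ with $(0,0,g)$ also has a $\Ubf$-component $-\psi(v,g(v))$, which your step $\Wbf\subset\Pbf^\der\cdot\Ubf=\Pbf^\der$ correctly absorbs once $\Ubf\subset\Pbf^\der$ is known.
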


\begin{proof}

(1) One may argue as in \cite{chen kuga} as the representations $\rho_\Vbf:\Gbf\ra\GL_\Vbf$ and $\rho_\Ubf:\Gbf\ra\GL_\Ubf$ admits no trivial subrepresentations due to the constraints on Hodge types.

(2) This is proved in \cite{pink thesis} 2.16.
\end{proof}

\begin{definition}[lattice space and S-space]\label{lattice space and s-space}

Let $(\Pbf,Y;Y^+)=(\Ubf,\Vbf)\rtimes(\Gbf,X;X^+)$ be a connected mixed Shimura datum with pure section $(\Gbf,X;X^+)$. Let $\Gamma$ be a congruence subgroup of $\Pbf(\Rbb)_+$, which gives us the connected mixed Shimura variety $M=\Gamma\bsh Y^+$.

(1) The lattice space associated to $M$ is $\Omega=\Gamma^\dagger\bsh \Pbf^\der(\Rbb)+$ where $\Gamma^\dagger=\Gamma\cap\Pbf^\der(\Rbb)^+$. It is equipped with the canonical measure $\nu_\Omega$, and we have the uniformazation map $\wp_\Gamma:\Pbf(\Rbb)^+\ra \Omega$.

A lattice subspace of $\Omega$ is of the form $\wp_\Gamma(\Hbf(\Rbb)^+$ for some $\Qbb$-subgroup $\Hbf\subset\Pbf^\der$ of type $\Hscr$.
\bigskip

(2) We write $\Yscr^+$ for the $\Pbf(\Rbb)_+$-orbit of $X^+$ in $Y$, called the real part of $Y^+$. The (connected) S-space associated to $M$ is $\Mscr=\Gamma\bsh \Yscr^+$. Since $\Gamma$ contains a neat subgroup of finite index, the quotient $\Mscr$ is a real analytic space with at most singularities of finite group quotient.

We also have the following orbit map associated to any point $y\in\Yscr^+$:

$$\kappa_y:\Omega=\Gamma^\dagger\bsh\Pbf^\der(\Rbb)^+\ra \Mscr=\Gamma\bsh \Yscr^+,\ \Gamma^\dagger g\mapsto \Gamma gy.$$ It is surjective because $\Gamma^\dagger\subset\Gamma$ and $\Yscr^+$ is a single $\Pbf^\der(\Rbb)^+$-orbit, as $X^+$ is homogeneous under $\Gbf^\der(\Rbb)^+$. It is a submersion of smooth real analytic spaces when $\Gamma$ is neat.

\end{definition}

\begin{remark}\label{remarks on s-spaces}

(1) The $\Pbf(\Rbb)$-orbit $\Yscr$ of $X$ in $Y$ is independent of the choice of pure section $(\Gbf,X)$ as different pure sections are conjugate to each other under $\Pbf(\Qbb)\subset\Pbf(\Rbb)$. $\Yscr^+$ is simply a connected component of $\Yscr$, as it is the pre-image of $X^+$ under the natural projection $\Yscr(\mono Y)\epim X$ whose fibers are connected (isomorphic to $\Wbf(\Rbb))$.

(2) In the Kuga case, we have $\Ubf=0$, hence the real part $\Yscr^+$ equals $Y^+$, and the S-space $\Mscr$ is just the Kuga variety. In the non-Kuga case, the projection $\pi_\Ubf$ gives us the commutative diagram $$\xymatrix{\Yscr^+\ar[d]^{\pi_\Ubf}\ar[r]^\subset & Y^+\ar[d]^{\pi_\Ubf}\\ Y^+/\Ubf(\Cbb)\ar[r]^= & Y^+/\Ubf(\Cbb)},$$ in which the vertical maps are smooth submersions of manifolds. The fibers of the right vertical map are $\Ubf(\Cbb)$, while fibers on the left are $\Ubf(\Rbb)$.

\end{remark}

\begin{lemma}

Let $(\Pbf,Y;Y^+)=(\Ubf,\Vbf)\rtimes(\Gbf,X;X^+)$ be a mixed Shimura datum with $\Ubf\neq 0$. Then for any congruence subgroup $\Gamma\subset\Pbf(\Qbb)_+$, the S-space $\Gamma\bsh \Yscr^+$ is dense in $\Gamma\bsh Y^+$ for the Zariski topology.

\end{lemma}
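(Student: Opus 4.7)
The plan is to use the fibration $\pi_\Ubf\colon M\ra M_\Ubf$ from Remark~\ref{two step fibration}, which exhibits $M=\Gamma\bsh Y^+$ as a torsor over the connected Kuga variety $M_\Ubf:=(\Gamma/\Gamma_\Ubf)\bsh (Y^+/\Ubf(\Cbb))$ under the algebraic torus $T=\Gamma_\Ubf\bsh\Ubf(\Cbb)$, where $\Gamma_\Ubf=\Gamma\cap\Ubf(\Qbb)$ is a full-rank lattice in $\Ubf(\Rbb)$. The statement then reduces to a fibrewise assertion that the maximal compact subgroup of an algebraic torus is Zariski dense.

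First I would identify how $\Mscr$ sits over $M_\Ubf$. Since $Y^+/\Ubf(\Cbb)$ is a single $\Pbf(\Rbb)_+$-orbit (namely $\Wbf(\Rbb)\Gbf(\Rbb)_+\cdot X^+$, cf.\ Remark~\ref{remarks on s-spaces}(1)), the image $\pi_\Ubf(\Mscr)$ is all of $M_\Ubf$. For any $\bar y\in Y^+/\Ubf(\Cbb)$, the fibre $\pi_\Ubf^\inv(\bar y)\subset Y^+$ is a single $\Ubf(\Cbb)$-orbit, and its intersection with $\Yscr^+$ is the corresponding $\Ubf(\Rbb)$-orbit. After quotienting by $\Gamma_\Ubf$, each fibre of $\Mscr\ra M_\Ubf$ is identified with the compact subtorus $T_c:=\Gamma_\Ubf\bsh\Ubf(\Rbb)$ sitting inside the torus fibre $T$ of $\pi_\Ubf$.

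The elementary input is the classical density $\overline{T_c}^{\mathrm{Zar}}=T$: any regular function on $T$ is a finite $\Cbb$-linear combination of characters (Laurent monomials with exponents in $\Gamma_\Ubf$), and such a combination vanishes on $T_c$ if and only if each coefficient vanishes, by orthogonality of characters on the compact torus. Equivalently, any algebraic subgroup of $T$ containing $T_c$ is all of $T$.

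To conclude, let $Z\subset M$ denote the Zariski closure of $\Mscr$. For every $p\in M_\Ubf$ the intersection $Z\cap\pi_\Ubf^\inv(p)$ is Zariski closed in the fibre $\pi_\Ubf^\inv(p)\isom T$ and contains $T_c$; by the preceding density it equals the whole fibre. Hence $Z$ contains $\pi_\Ubf^\inv(p)$ for every $p\in M_\Ubf$, forcing $Z=M$. The principal piece of bookkeeping is to ensure that the torus-torsor structure on $\pi_\Ubf$ is genuinely algebraic---which is the content of Remark~\ref{two step fibration}---and to handle possible quotient singularities arising when $\Gamma$ does not act freely, by passing to a neat finite-index subgroup of $\Gamma$ and descending the Zariski closure along the resulting finite \'etale cover.
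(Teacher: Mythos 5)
Your proof is correct, and it is a close cousin of the paper's argument rather than a wholly different one, but the density input and the place where it is applied do differ, so a comparison is worthwhile. The paper works upstairs on the uniformizing space: $\pi_\Ubf\colon Y\ra Y/\Ubf(\Cbb)$ is a $\Ubf(\Cbb)$-torsor, $\Yscr$ meets each fibre in a single $\Ubf(\Rbb)$-orbit, and the density used is simply that $\Ubf(\Rbb)$ is Zariski dense in the affine space $\Ubf(\Cbb)$; the passage from $\Yscr\subset Y$ to $\Mscr\subset M$ is then declared clear after quotienting by $\Gamma$. You instead work downstairs on the algebraic variety $M$ itself, using the torus-torsor structure of $\pi_\Ubf\colon M\ra M_\Ubf$ and the classical fact (character orthogonality) that the compact subtorus $\Gamma_\Ubf\bsh\Ubf(\Rbb)$ is Zariski dense in the algebraic torus $\Gamma_\Ubf\bsh\Ubf(\Cbb)$ --- precisely the fact the paper records in its remark comparing compact and algebraic tori. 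What your version buys is that the Zariski closure is taken where it literally makes sense, in the algebraic fibres of $M\ra M_\Ubf$, so the quotient step the paper leaves implicit becomes automatic; the price is that you must know the torsor structure is algebraic, which the paper states only for levels of product type, so your reduction should shrink $\Gamma$ to a finite-index subgroup cut out by such a (neat) level. Note also that the resulting cover $\Gamma'\bsh Y^+\ra\Gamma\bsh Y^+$ is finite and surjective but need not be \'etale when $\Gamma$ has torsion; this does not matter, since finiteness and surjectivity (hence closedness) are all that the descent of Zariski density requires.
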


\begin{proof}
In the projection $\pi_\Ubf: (\Ubf,\Vbf)\rtimes(\Gbf,X)\ra \Vbf\rtimes(\Gbf,X)$, $\pi_\Ubf:Y\ra Y/\Ubf(\Cbb)$ is a submersion of complex manifolds, and it is a $\Ubf(\Cbb)$-torsor. The subspace $\Yscr$ is the orbit of $X$ under $\Pbf(\Rbb)$ while $Y/\Ubf(\Cbb)$ is the orbit of $X$ under $\Vbf(\Rbb)\rtimes\Gbf(\Rbb)$. So $\Yscr$ is a $\Ubf(\Rbb)$-torsor over $Y/\Ubf(\Rbb)$, lying inside $Y$. $\Ubf(\Rbb)\subset\Ubf(\Cbb)$ is Zariski dense when we view $\Ubf(\Cbb)$ as a complex algebraic variety, hence the density of $\Yscr$ in $Y$. The proof for $\Mscr\subset M$ is clear when we restrict to connected components and take quotient by $\Gamma$.

\end{proof}

\begin{remark}[compact tori vs. algebraic tori]\label{compact tori vs. algebraic tori}
When $\Vbf=0$ and $\Gamma=\Gamma_\Ubf\rtimes\Gamma_\Gbf$ for some lattice $\Gamma_\Ubf$ in $\Ubf(\Qbb)$ stabilized by $\Gamma_\Gbf$ a congruence subgroup of $\Gbf(\Qbb)_+$, the fibration $M=\Gamma\bsh Y^+\ra S=\Gamma_\Gbf\bsh X^+$ is a torus group scheme over $S$, whose fibers are complex tori isomorphic to $\Gamma_\Ubf\bsh\Ubf(\Cbb)\isom\Zbb^d\bsh\Cbb^d$, $d$ being the dimension of $\Ubf$. Thus the S-space $\Mscr=\Gamma\bsh \Yscr^+$ is a real analytic subgroup of $M$ relative to the base $S$, whose fibers are compact tori isomorphic to $\Gamma_\Ubf\bsh \Ubf(\Rbb)$ in the split comlex tori $\Gamma_\Ubf\bsh \Ubf(\Cbb)$, hence Zariski dense.

Using harmonic analysis on $\Gamma_\Ubf\bsh \Ubf(\Rbb)$ one can prove that the analytic closure of a sequence of connected closed Lie subtori in it is still a connected closed Lie subtorus, which implies that the Zariski closure of a sequence of connected algebraic subtori in $\Gamma_\Ubf\bsh\Ubf(\Cbb)$ is an algebraic subtorus, cf. \cite{ratazzi ullmo} Section 4.1. This can be viewed as a motivation for our notion of S-spaces.

\end{remark}

The advantage of S-spaces is that they carry canonical measures of finite volumes. Parallel to the Kuga case studied in \cite{chen kuga} 2.17 and 2.18, we have the following:

\begin{definition-proposition}[canonical measures on S-spaces]\label{canonical measures on s-spaces}

Let $M=\Gamma\bsh Y^+$ be a connected mixed Shimura variety associated to $(\Pbf,Y;Y^+)$, with $\Omega=\Gamma^\dagger\bsh\Pbf^\der(\Rbb)^+$ the corresponding lattice space, and $\Mscr=\Omega\bsh\Yscr^+$ the S-space. Fix a base point $y\in \Yscr^+\subset Y^+$.

(1) The orbit map $\kappa_y:\Pbf^\der(\Rbb)^+\ra \Yscr^+,\ g\mapsto gy$ is a submersion with compact fibers. The isotropy subgroup $K_y$ of $y$ in $\Pbf^\der(\Rbb)^+$ is a macimal compact subgroup of $\Pbf^\der(\Rbb)^+$.

The left invariant Haar measure $\nu_\Pbf$ on $\Pbf^\der(\Rbb)^+$ passes to a left invariant measure $\mu_\Yscr=\kappa_{y*}\nu_\Pbf$ on $\Yscr^+$, which is independent of the choice of $y$.

(2) Taking quotient by congruence subgroups, the orbit map $\kappa_y:\Gamma^\dagger\bsh\Pbf^\der(\Rbb)^+\ra \Gamma\bsh \Yscr^+,\ \ \Gamma^\dagger g\mapsto\Gamma gy$ is a submersion with compact fibers. The push-forward $\kappa_{y*}$ sends $\nu_\Omega$ to a canonical probability measure $\mu_{\Mscr}$ on $\Mscr=\Gamma\bsh \Yscr^+$, independent of the choice of $y$.

(3) Let $M'\subset\Mscr$ be a special lattice subspace defined by $(\Pbf',Y';Y'^+)$, and take $y\in Y'^+\subset Y^+$. Then we have the commutative diagram $$\xymatrix{ \Omega'\ar[r]^\subset\ar[d]^{\kappa_y} & \Omega\ar[d]^{\kappa_y}\\ \Mscr'\ar[r]^\subset & \Mscr}$$ with $\Omega'$ the special lattice space associated to $M'$, $\Mscr'$ the corresponding special S-subspace. In particular we have $\kappa_{y*}\nu_{\Omega'}=\mu_{\Mscr'}$.

Similarly, for the fibration over the pure base $\pi:M\ra S=\Gamma_\Gbf\bsh X^+$ with $\Gamma=\Gamma_\Vbf\rtimes\Gamma_\Gbf$, we have the submersions $\pi:\Omega\ra\Omega_\Gbf:=\Gamma_\Gbf^\dagger\bsh\Gbf^\der(\Rbb)^+$, $\pi:\Mscr'\ra\Sscr=S$, together with $\pi_*\nu_\Omega=\nu_{\Omega_\Gbf}$ and $\pi_*\mu_\Mscr=\mu_S$.

\end{definition-proposition}

\begin{proof}
  It suffices to replace the $\Vbf$'s etc. in \cite{chen kuga} 2.17 and 2.18 by $\Wbf$'s etc. as the proof there already works for general unipotent $\Vbf$'s.
\end{proof}

In the pure case, we have the notion of $\Tbf$-special sub-object, where $\Tbf$ is  the connected center of the $\Qbb$-group defining the subdatum, the special subvarieties, etc. In the mixed case, the connected center is of the form $w\Tbf w^\inv$ following the notations in \ref{description of subdata}, and we prefer to separate $\Tbf$ and $w$, because $\Tbf$ provides information on the image in the pure base, and $w$ describes how the pure section has been translated from the given one defined by $(\Gbf,X)\subset(\Pbf,Y)$. In Introduction we have seen motivation of this notion for Kuga varieties via the description of special subvarieties as torsion subschemes in some subfamily of abelian varieties.

\begin{definition}[$(\Tbf,w)$-special subdata]\label{tw-special subdata}

Let $(\Pbf,Y)=(\Ubf,\Vbf)\rtimes(\Gbf,X)$ be a mixed Shimura datum, with $\Wbf$ the central extension of $\Vbf$ by $\Ubf$ as the unipotent radical of $\Pbf$. Take $\Tbf$ a $\Qbb$-torus in $\Gbf$ and $w$ an element in $\Wbf(\Qbb)$.

(1) A subdatum $(\Pbf',Y')$ of $(\Pbf,Y)$ is said to be $(\Tbf,w)$-special if it is of the form $(\Ubf',\Vbf')\rtimes(w\Gbf'w^\inv,wX')$ presented as in \ref{description of subdata}, with $\Tbf$ equal to the connected center of $\Gbf'$. In the language of \cite{ullmo yafaev}, $(\Gbf',X')$ is a $\Tbf$-special subdautm of $(\Gbf,X)$, and the element $w\in\Wbf(\Qbb)$ conjugates it to a pure section of $(\Pbf',Y')$ in the sense of \ref{remark on pure sections}.

(2) Similarly, if $M=\Gamma\bsh Y^+$ is a connected mixed Shimura variety  defined by $(\Pbf,Y;Y^+)$, then a special subvariety of $M$ is $(\Tbf,w)$-special if it is defined by some (connected) $(\Tbf,w)$-special subdatum.

We also define notions such as $(\Tbf,w)$-special lattice subspaces, $(\Tbf,w)$-special S-subspaces, in the evident way.
\end{definition}

\begin{remark}[minimality of Mumford-Tate groups]\label{minimality of Mumford-Tate groups}

In \ref{fibred mixed shimura data} we have imposed the condition (e) of minimality for pure Shimura data. For example, if $(\Gbf,X)$ contains some subdatum $(\Gbf',X')$, and that $\Gbf$ contains $\Qbb$-subgroup $\Gbf''\supsetneq\Gbf'$ such that $\Gbf''^\der=\Gbf'^\der$, then $(\Gbf'',X')$ is NOT a pure Shimura subdatum in our sense because $\Gbf''$ is NOT minimal. It is a subdatum in the sense of \cite{pink thesis}.

 
 For a general $\Qbb$-subtorus $\Tbf$, the set of $\Tbf$-special subdata of $(\Gbf,X)$ in our sense could be empty, simply because $\Tbf$ has been chosen to be ''too large', although one might find subdatum $(\Gbf',X')$ in usual sense such that $\Tbf$ is the connected center of $\Gbf$.

\end{remark}


We will also use the following variant to state our main results on equidistribution of special subvarieties, which is closely related to the formulation of bounded Galois orbits studied in Section 4, inspired by the pure case studied in \cite{ullmo yafaev}. Subsets of closed subvarieties of a $\Qac$-variety of finite type are always countable, hence we talk about sequences of special subvarieties indexed by $\Nbb$ instead of ''families'', ''subsets'', etc.

\begin{definition}[bounded sequence]\label{bounded sequence} Let $M=\Gamma\bsh Y^+$ be a connected mxied Shimura variety defined by $(\Pbf,Y;Y^+)$, with pure section $(\Gbf,X)$ and Levi decomposition $\Pbf=\Wbf\rtimes\Gbf$. Fix a finite set $B=\{(\Tbf_1,w_1),\cdots,(\Tbf_r,w_r)\}$ with $\Tbf_i$ $\Qbb$-torus in $\Gbf$ and $w_i\in\Wbf(\Qbb)$, $i=1,\cdots,r$.

(1) A special subvariety of $M$ is said to be bounded by $B$ (or $B$-bounded) if it is $\TW$-special for some $\TW\in B$.  A sequence $(M_n)$ of special subvarieties in $M$ is said to be bounded by $B$ if  each $M_n$ is $B$-bounded.

(2) Similarly,  a sequence of special lattice subspaces resp.   of special S-subspaces is bounded by $B$  if its members are defined by to $\TW$-special subdatum for $\TW\in B$.

(3) For $\Omega$ resp. $\Mscr$ the lattice space resp. the S-space associated to $M$ we write $\Pscr_B(\Omega)$ resp. $\Pscr_B(\Mscr)$ for the set of canonical probability measures on $\Omega$ resp. on $\Mscr$ associated to $B$-bounded special subvarieties. Note that for $\Omega'\subset\Omega$ a $B$-special lattice subspace, we identify the canonical probability measure on $\Omega'$ as a probability measure on $\Omega$ whose support equals $\Omega'$. The case of special S-subspaces is similar.

\bigskip We call $B$ a finite bounding set.
\end{definition}

! \emph{the following remark should be moved to Section 4}

\begin{remark}[independence of $\Gamma$-conjugation]\label{independence of gamma-conjugation}

(1) For the connected mixed Shimura variety $M=\Gamma\bsh Y^+$ as above, with $\Gamma=\Gamma_\Wbf\rtimes\Gamma_\Gbf$, we may conjugate $\Tbf$ by $\alpha\in\Gamma_\Gbf$, and also translate $w$ by $\Gamma_\Wbf$.

\begin{itemize}

\item In the pure case, if a special subvariety is defined by $(\Gbf',X';X'^+)$, then it is also defined by $(\alpha\Gbf'\alpha^\inv; \alpha X'; \alpha X'^+)$;

\item similarly in the mixed case, one may compute directly that translate $w$ by an element in $\Gamma_\Wbf$ gives the same special subvariety, using \ref{group law}.

\end{itemize}

Note that $w$ and $\Gamma_\Wbf$ generates a congruence subgroup of $\Wbf(\Qbb)$, in which $\Gamma_\Wbf$ is of finite index. This is reduced to the commutative case $\Wbf=\Ubf$ or $\Wbf=\Vbf$, which is evident because $\Gamma_\Wbf$ is a $\Zbb$-lattice, and coefficients of $w$ are rational \wrt $\Gamma_\Wbf$.

In some sense, in $\Gamma_\Gbf$ and $\Gamma$ is encoded information on the integral structure of the Shimura varieties, such as good or bad reduction, etc.  Our estimation of the Galois orbits involves the position of $\Tbf$

(2) The non-connected case is similar, except that it is better to talk about special subvarieties using adelic Hecke translations inside the total mixed Shimura tower $\limproj_KM_K(\Pbf,Y)$, which is a pro-scheme with a continuous action of $\Pbf(\adele)$ in the sense of \cite{deligne pspm} ?.?.  We will not need this complicated version. 

Note in the unipotent fiber, $w\in\Wbf(\adele)$ and $K_\Wbf$ generates a \cosg containing $K_\Wbf$ as a subgroup of finite index, similar to the connected case in (1).
\end{remark}

\section{bounded equidistribution of special subvarieties}

We first consider the case when the bound $B$ consists of one single element $\TW$, and we write $\Pscr_\TW(\Sscr)$ in place of $\Pscr_B(\Sscr)$ for $\Sscr\in\{\Omega,\Mscr\}$. This is exactly the analogue of the $\Tbf$-special case for pure Shimura varieties, and is also deduced from the following theorem of S. Mozes and N. Shah:

\begin{theorem}[Mozes-Shah]\label{mozes shah} Let $\Omega=\Gamma\bsh\Hbf(\Rbb)^+$ be the lattice space associated to a $\Qbb$-group of type $\Hscr$ and a congruence subgroup $\Gamma\subset\Hbf(\Rbb)^+$. Write $\Pscr_h(\Omega)$ for the set of canonical measures on $\Omega$ associated to lattice subspaces defined by $\Qbb$-subgroups of type $\Hscr$. Then $\Pscr_h(\Omega)$ is compact for the weak topology as a subset of the set of Radon measures on $\Omega$, and the property of ''support convergence'' holds on it: if $\nu_n$ is a convergent sequence in $\Pscr_h(\Omega)$ of limit $\nu$, then we have $\supp\nu_n\subset \supp\nu$ for $n\geq N$, $N$ being some positive integer, and the union $\bigcup_{n\geq N}\supp\nu_n$ is dense in $\supp\nu$ for the analytic topology.

\end{theorem}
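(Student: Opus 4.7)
The plan is to deduce the statement from the measure-classification and compactness theorems of Ratner and of Mozes--Shah \cite{mozes-shah} for unipotent flows on arithmetic quotients, by verifying that the class of canonical measures $\Pscr_h(\Omega)$ under consideration falls within their scope.

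First I would check the main hypothesis: each $\Qbb$-subgroup $\Hbf' \subset \Hbf$ of type $\Hscr$ has the property that $\Hbf'(\Rbb)^+$ is generated by one-parameter unipotent $\Rbb$-subgroups. Writing $\Hbf' = \Wbf' \rtimes \Lbf'$ with $\Wbf'$ unipotent and $\Lbf'$ connected semisimple without $\Qbb$-normal subgroups of positive dimension whose $\Rbb$-points are compact, $\Wbf'(\Rbb)$ is tautologically generated by unipotents, while $\Lbf'(\Rbb)^+$ is generated by its one-parameter unipotent subgroups by the standard fact for connected semisimple real Lie groups without compact factors. Hence the canonical probability measure attached to the closed orbit $\wp_\Gamma(\Hbf'(\Rbb)^+) \subset \Omega$ is an ergodic invariant probability measure for the action of a group generated by unipotents, which places it within the Mozes--Shah framework.

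Next I would address compactness. Given a sequence $(\mu_n)$ in $\Pscr_h(\Omega)$, the Dani--Margulis non-divergence theorem for unipotent flows on arithmetic quotients provides the crucial uniform non-escape of mass: for any $\epsilon > 0$ there is a compact $K_\epsilon \subset \Omega$ with $\mu_n(K_\epsilon) \geq 1 - \epsilon$ uniformly in $n$. This allows us, after passing to a subsequence, to extract a weak limit $\mu$ which is still a probability measure on $\Omega$. The conclusion of Mozes--Shah then identifies $\mu$ as algebraic: it is supported on a closed orbit $\wp_\Gamma(\Hbf''(\Rbb)^+)$ for some $\Qbb$-subgroup $\Hbf'' \subset \Hbf$, and one verifies from the approximation $\mu_n \to \mu$ (using that ``type $\Hscr$'' is preserved under the relevant limiting procedure applied to the defining subgroups) that $\Hbf''$ is again of type $\Hscr$.

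Finally, the support convergence assertion --- $\supp\mu_n \subset \supp\mu$ for $n$ large and density of $\bigcup_{n \geq N}\supp\mu_n$ in $\supp\mu$ --- is the ``support stability'' conclusion of \cite{mozes-shah}, proved there by a linearization of unipotent orbits in a suitable exterior-power representation, combined with a recurrence argument for unipotent flows. The main obstacle, as I see it, is mostly bookkeeping: one must match the ``type $\Hscr$'' notion adopted here with the hypotheses of \cite{mozes-shah} (in particular, verifying that the presence of the unipotent radical $\Wbf$ of $\Hbf$ does not introduce subtleties beyond those handled in the semisimple reductive case, since unipotent quotients of arithmetic quotients are again arithmetic quotients), and confirm that Dani--Margulis non-divergence applies uniformly to the present family. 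Once those verifications are in place, the theorem follows by direct citation.
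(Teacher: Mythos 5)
The paper itself offers no proof of this statement: it is quoted as an input, namely the type-$\Hscr$ formulation (due to Clozel--Ullmo and Ullmo--Yafaev in the pure case) of the theorems of \cite{mozes-shah}, so there is no internal argument to compare your route against; your proposal is essentially a reconstruction of that standard deduction from Ratner, Mozes--Shah and Dani--Margulis, which is the right frame. However, one step is wrong as you state it. You verify the main hypothesis by asserting that $\Lbf'(\Rbb)^+$ is generated by one-parameter unipotent subgroups ``by the standard fact for connected semisimple real Lie groups without compact factors''. The type-$\Hscr$ condition only excludes normal $\Qbb$-subgroups of positive dimension whose real points are compact; it does not exclude compact factors of the real Lie group. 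For example, if $D$ is a quaternion algebra over a real quadratic field $F$, split at one archimedean place and ramified at the other, then $\Lbf'=\Res_{F/\Qbb}\mathbf{SL}_1(D)$ is $\Qbb$-simple of type $\Hscr$, yet $\Lbf'(\Rbb)\isom \mathrm{SL}_2(\Rbb)\times \mathrm{SU}(2)$ and its unipotent one-parameter subgroups generate only $\mathrm{SL}_2(\Rbb)\times\{1\}$. So ``generated by unipotents'' fails, and with it your justification that the canonical measures lie in the Mozes--Shah class.

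The standard repair, which is what the quoted formulation actually rests on, is to work with the closed normal subgroup $L\subset\Hbf'(\Rbb)^+$ generated by all unipotent one-parameter subgroups (i.e.\ $\Wbf'(\Rbb)$ together with the noncompact almost-simple real factors of the semisimple part): since no $\Qbb$-simple factor of $\Hbf'/\Wbf'$ has compact real points, the arithmetic subgroup $\Gamma\cap\Hbf'(\Rbb)^+$ projects densely to the compact quotient $\Hbf'(\Rbb)^+/L$, and hence the canonical $\Hbf'(\Rbb)^+$-invariant probability measure on the closed orbit is already invariant and \emph{ergodic} under $L$; this, not generation of the whole group by unipotents, places $\Pscr_h(\Omega)$ inside the class treated by \cite{mozes-shah}. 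A second point you dismiss as bookkeeping also needs an argument: that the weak limit is again the canonical measure of a lattice subspace attached to a $\Qbb$-subgroup of type $\Hscr$. This uses support convergence to get invariance of the limit under $\Hbf'_n(\Rbb)^+$ for $n$ large, and then the structural part of Mozes--Shah/Shah identifying the identity component of the stabilizer with the real points of a $\Qbb$-subgroup containing the $\Hbf'_n$, from which the type-$\Hscr$ property of the limit group is inherited; none of this is automatic from the convergence of measures alone. With these two corrections your outline matches the argument the paper implicitly invokes.
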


Here the notion of lattice subspaces associated to $\Qbb$-subgroup of type $\Hscr$ is defined in the same way as we have seen in \ref{lattice spaces and canonical measures}: we have the uniformization map $\wp_\Gamma:\Hbf(\Rbb)^+\ra\Omega$; the images $\Omega'=\wp_\Gamma\bsh\Hbf'(\Rbb)^+$ associated to $\Qbb$-subgroups $\Hbf'$ of type $\Hscr$ are called lattice subspaces, and they carry canonical probability measures induced by the Haar measures on $\Hbf'(\Rbb)^+$, which we view as probability measures on $\Omega$ of support equal to $\Omega'$.

Similar to the rigid $\Tbf$-special subdata  in \cite{chen kuga} 3.1, we have the following recovering property for $\TW$-special sub-objects:

\begin{lemma}[recovery, cf. \cite{chen kuga} 3.5]\label{recovery}

Let $\Omega=\Gamma^\dagger\bsh\Pbf^\der(\Rbb)^+$ be the lattice space associated to a connected mixed Shimura variety $M$ defined by $(\Pbf,Y;Y^+,\Gamma)$. Let $\Ccal$ be the set of $\TW$-special ''$\Qbb$-subgroups'' of $\Pbf$, i.e. $\Qbb$-subgroups $\Pbf'$ that come from $\TW$-special subdata $(\Pbf',Y')$ of $(\Pbf,Y)$. Then the map $$\Ccal\ra\{\mathrm{lattice\ subspaces\ of\ }\Omega\},\ \Pbf'\ra\wp_\Gamma(\Pbf'^\der(\Rbb)^+)$$ is injective. 
\end{lemma}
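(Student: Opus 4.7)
The plan is to prove injectivity in two steps: first recover $\Pbf_1'^\der = \Pbf_2'^\der$ from the common lattice subspace by a connectedness argument, then reconstruct $\Pbf_1' = \Pbf_2'$ from this derived subgroup using the $\TW$-special structure.

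For the first step, I will suppose $\wp_\Gamma(\Pbf_1'^\der(\Rbb)^+) = \wp_\Gamma(\Pbf_2'^\der(\Rbb)^+)$, equivalently $\Gamma^\dagger \Pbf_1'^\der(\Rbb)^+ = \Gamma^\dagger \Pbf_2'^\der(\Rbb)^+$ inside $\Pbf^\der(\Rbb)^+$. Each $\Pbf_i'^\der$ is of type $\Hscr$ by Lemma \ref{commutator}(1); since $\Gamma^\dagger$ is arithmetic, $\Gamma^\dagger \cap \Pbf_2'^\der(\Rbb)^+$ is a lattice in $\Pbf_2'^\der(\Rbb)^+$ by Borel--Harish-Chandra. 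Consequently the image of $\Gamma^\dagger$ in the homogeneous manifold $\Pbf^\der(\Rbb)^+/\Pbf_2'^\der(\Rbb)^+$ is discrete. Being connected, the image of $\Pbf_1'^\der(\Rbb)^+$ there must collapse to a single point, and since $1 \in \Pbf_1'^\der(\Rbb)^+$ maps to the identity coset, this forces $\Pbf_1'^\der(\Rbb)^+ \subset \Pbf_2'^\der(\Rbb)^+$. Symmetry yields equality, and taking Zariski closure in $\Pbf$ gives $\Pbf_1'^\der = \Pbf_2'^\der$ as $\Qbb$-subgroups.

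For the second step, write $\Pbf_i' = \Wbf_i' \rtimes w\Gbf_i'w^\inv$ per Definition \ref{tw-special subdata}, so $\Pbf_i'^\der = \Wbf_i' \rtimes w\Gbf_i'^\der w^\inv$. Uniqueness of the unipotent radical gives $\Wbf_1' = \Wbf_2' =: \Wbf'$, while the Levi complements $w\Gbf_i'^\der w^\inv$ of the common derived group are $\Wbf'(\Qbb)$-conjugate. Adapting the Kuga case in \cite{chen kuga} 3.5, I will argue that the conjugating element must normalize $\Gbf_i'^\der$ by invoking the minimality condition of Definition \ref{fibred mixed Shimura data}(e) on $\Gbf_i'$ together with Lemma \ref{commutator}(2), which trivializes the $\Gbf^\der$-action on $\Ubf$; hence $\Gbf_1'^\der = \Gbf_2'^\der$ as $\Qbb$-subgroups of $\Gbf$. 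Since $\Gbf_i' = \Tbf \cdot \Gbf_i'^\der$ is determined by its connected center $\Tbf$ and its derived subgroup, one concludes $\Gbf_1' = \Gbf_2'$, and therefore $\Pbf_1' = \Wbf' \rtimes w\Gbf_1'w^\inv = \Pbf_2'$.

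The main obstacle I anticipate is the Levi-complement ambiguity in the second step: a priori the two Levi complements of $\Pbf_1'^\der = \Pbf_2'^\der$ differ by $\Wbf'(\Qbb)$-conjugation, and ruling out this discrepancy requires a careful appeal to the Hodge-theoretic minimality. The $\Ubf$-contribution to $\Wbf'$ is harmless thanks to Lemma \ref{commutator}(2), so the delicate work reduces to the $\Vbf$-component, which corresponds precisely to the Kuga situation already treated in \cite{chen kuga} 3.5.
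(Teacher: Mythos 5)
Your first step is sound and is a genuinely different (if heavier) route than the paper's: the paper disposes of it in one line by computing the tangent space of the common lattice subspace at the base point, which identifies the Lie algebras and hence gives $\Pbf_1'^\der(\Rbb)^+=\Pbf_2'^\der(\Rbb)^+$, whereas you deduce the same inclusion from closedness/discreteness of the $\Gamma^\dagger$-orbit (Borel--Harish-Chandra plus the closed-countable-orbit argument) and connectedness of $\Pbf_1'^\der(\Rbb)^+$. Either way one gets $\Pbf_1'^\der=\Pbf_2'^\der$ as $\Qbb$-groups after taking Zariski closures, and likewise $\Wbf_1'=\Wbf_2'$ as the common unipotent radical.

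The gap is in your second step. The claim you actually need, $\Gbf_1'^\der=\Gbf_2'^\der$ inside $\Gbf$, is only announced, not proved: you say the $\Wbf'(\Qbb)$-element conjugating the two Levi factors ``must normalize $\Gbf_i'^\der$'' by appeal to the minimality condition (e) of \ref{fibred mixed Shimura data} and to Lemma \ref{commutator}(2), but no mechanism is given, and it is not clear why minimality (a statement about $x(\Sbb)$ generating $\Gbf'$) would force a unipotent element carrying one Levi factor to another to normalize either of them; as stated this step would not go through. Moreover the Levi-ambiguity you flag as the main obstacle is a phantom: since $w$ and any conjugating element lie in $\Wbf(\Qbb)$, everything collapses under the projection $\Pbf\epim\Gbf=\Pbf/\Wbf$. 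The image of $\Pbf_i'^\der=\Wbf_i'\rtimes w\Gbf_i'^\der w^\inv$ under this projection is exactly $\Gbf_i'^\der$, so $\Pbf_1'^\der=\Pbf_2'^\der$ yields $\Gbf_1'^\der=\Gbf_2'^\der$ immediately --- this is precisely the paper's argument (``as their image in $\Gbf$''), and it needs neither minimality nor Lemma \ref{commutator}(2). From there your conclusion is correct: both $\Gbf_i'$ have connected center $\Tbf$, so $\Gbf_1'=\Tbf\,\Gbf_1'^\der=\Tbf\,\Gbf_2'^\der=\Gbf_2'$, and since both subdata are $\TW$-special with the \emph{same} $w$, $\Pbf_1'=\Wbf'\rtimes w\Gbf_1'w^\inv=\Pbf_2'$. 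So replace the normalization plan by the projection observation and the proof closes; as written, that step is the missing piece.
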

\begin{proof}
  If $(\Pbf_1,Y_1)$ and $(\Pbf_2,Y_2)$ are $\TW$-special and give the same lattice space $\Omega'$, then computing the tangent space of $\Omega'$ shows that $\Pbf_1^\der=\Pbf_2^\der$. We may write $\Pbf_i=\Wbf_i\rtimes(w\Gbf_iw^\inv)$, $\Gbf_i$ being $\Qbb$-subgroups of $\Gbf$ of connected center $\Tbf$, then $\Wbf_1=\Wbf_2$ as the unipotent radical of $\Pbf_1^\der=\Pbf_2^\der$, and $\Gbf_1^\der=\Gbf_2^\der$ as their image in $\Gbf$, hence $\Gbf_1=\Gbf_2$ as they are of the same connected center $\Tbf$.
\end{proof} 

We also have \begin{lemma}

The set of maximal $\TW$-special subdata in $(\Pbf,Y)$ is finite.

\end{lemma}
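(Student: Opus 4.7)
The plan is to decompose a $\TW$-special subdatum into its pure base and its unipotent fibre, observe that the unipotent fibre admits a unique maximal choice, and thereby reduce the statement to the finiteness of maximal $\Tbf$-special subdata in the pure Shimura datum $(\Gbf,X)$, a result known from \cite{ullmo yafaev}.

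First, by Definition \ref{tw-special subdata} combined with the classification in \ref{description of subdata}, any $\TW$-special subdatum of $(\Pbf,Y)$ has the form $(\Ubf',\Vbf')\rtimes(w\Gbf'w^\inv,wX')$, where $(\Gbf',X')$ is a $\Tbf$-special pure subdatum of $(\Gbf,X)$ and where $\Vbf'\subset\Vbf$, $\Ubf'\subset\Ubf$ are $\Gbf'$-subrepresentations satisfying $\psi(\Vbf'\times\Vbf')\subset\Ubf'$. Since the pair $\TW$ is held fixed, the only varying parameters are the triple $((\Gbf',X'),\Vbf',\Ubf')$, and one $\TW$-special subdatum is contained in another precisely when the corresponding triples are contained componentwise.

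Next, I would show that for every $\Tbf$-special pure subdatum $(\Gbf',X')$ the pair $(\Vbf',\Ubf')=(\Vbf,\Ubf)$ is always admissible and is therefore the unique maximal choice. Indeed, $\Vbf$ and $\Ubf$ are already $\Gbf$-representations, hence $\Gbf'$-representations by restriction; the Hodge types $\{(-1,0),(0,-1)\}$ and $\{(-1,-1)\}$ of $\rho_\Vbf\circ x$ and $\rho_\Ubf\circ x$ are inherited at every point $x\in X'\subset X$; and the compatibility $\psi(\Vbf\times\Vbf)\subset\Ubf$ is built into the structure of $(\Pbf,Y)$. Consequently, the assignment $(\Gbf',X')\mapsto(\Ubf,\Vbf)\rtimes(w\Gbf'w^\inv,wX')$ yields a bijection between maximal $\Tbf$-special pure subdata of $(\Gbf,X)$ and maximal $\TW$-special subdata of $(\Pbf,Y)$.

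This reduces the claim to the pure-case statement that the set of maximal $\Tbf$-special pure subdata of $(\Gbf,X)$ is finite, which is established in \cite{ullmo yafaev}. Structurally, any such $\Gbf'$ lies in the reductive centralizer $Z_\Gbf(\Tbf)$ and decomposes as $\Tbf\cdot\Gbf'^\der$ with $\Gbf'^\der$ a semisimple $\Qbb$-subgroup commuting with $\Tbf$, while the minimality requirement \ref{fibred mixed Shimura data}(e) forces $\Gbf'$ to be the generic Mumford-Tate group of $X'$; together these constraints leave only finitely many maximal possibilities. The hard part is really this pure-case finiteness, which I would invoke rather than reprove; the surrounding reductions are essentially formal once the pure base and the unipotent fibre are disentangled.
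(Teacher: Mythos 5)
Your argument is correct and is essentially the approach the paper has in mind: the paper's own ``proof'' is only the citation to \cite{chen kuga} 3.7, whose Kuga-case argument is precisely your reduction --- the unipotent fibre of a maximal $\TW$-special subdatum can be taken to be all of $(\Ubf,\Vbf)$, so maximal $\TW$-special subdata are determined by maximal $\Tbf$-special pure subdata of $(\Gbf,X)$, whose finiteness is the pure-case result of \cite{clozel ullmo}, \cite{ullmo yafaev}. The one point you gloss over is condition (d) of Definition \ref{fibred mixed Shimura data} for the enlarged datum $(\Ubf,\Vbf)\rtimes(w\Gbf'w^\inv,wX')$ (knowing (d) for $\Tbf$ acting on $\Ubf'\oplus\Vbf'$ does not formally give it on all of $\Ubf\oplus\Vbf$); it does hold, by the standard fact that $\Tbf$, being contained in the Mumford--Tate torus of a special point of $X'$, is isogenous to a product of a compact torus and a split torus, so every representation of it is of the required type --- worth one sentence in your write-up.
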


Its proof is the same as in \cite{chen kuga} 3.7.

\begin{theorem}[equidistribution of $\TW$-special subspaces]\label{equidistribution of tw-special subspaces} Let $\Sscr$  be the lattice space (resp. the S-space) associated to a connected mixed Shimura variety $M=\Gamma\bsh Y^+$ defined by $(\Pbf,Y;Y^+)=(\Ubf,\Vbf)\rtimes(\Gbf,X)$. Fix a pair $(\Tbf,w)$ as in \ref{tw-special subdata}. The set $\Pscr_B(\Sscr)$ is compact for the weak topology, and the property of support convergence holds in it in the sense of \ref{mozes shah}.

\end{theorem}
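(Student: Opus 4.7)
The plan is to reduce the theorem to the Mozes--Shah theorem \ref{mozes shah} applied to $\Pbf^\der$, which is of type $\Hscr$ by Lemma \ref{commutator}. This parallels the pure case treated in \cite{clozel ullmo} and \cite{ullmo yafaev}, and most of the work lies in showing that $\Pscr_\TW(\Omega)$ is closed inside $\Pscr_h(\Omega)$.

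First I reduce the S-space case to the lattice-space case. By \ref{canonical measures on s-spaces}, the orbit map $\kappa: \Omega \to \Mscr$ is a submersion with compact fibers satisfying $\kappa_* \nu_\Omega = \mu_\Mscr$, and it sends each $\TW$-special lattice subspace $\Omega'$ onto the corresponding $\TW$-special S-subspace $\Mscr'$ with $\kappa_* \nu_{\Omega'} = \mu_{\Mscr'}$. Push-forward of probability measures is continuous for the weak topology, and $\kappa$ restricts to a continuous surjection between supports, so compactness and the support-convergence property on $\Pscr_\TW(\Omega)$ transfer to $\Pscr_\TW(\Mscr)$. It thus suffices to treat $\Sscr = \Omega$.

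For the lattice case, \ref{mozes shah} applied to $\Pbf^\der$ already yields compactness and support convergence on $\Pscr_h(\Omega) \supset \Pscr_\TW(\Omega)$. To see that $\Pscr_\TW(\Omega)$ is closed, take a convergent sequence $\nu_n \to \nu$ in $\Pscr_\TW(\Omega)$ with $\nu_n$ arising from a $\TW$-special subdatum $(\Pbf_n, Y_n)$ and $\nu$ from a type-$\Hscr$ subgroup $\Hbf \subset \Pbf^\der$. Support convergence forces $\Pbf_n^\der \subset \Hbf$ modulo $\Gamma^\dagger$-conjugacy for $n$ large. The finiteness of maximal $\TW$-special subdata, combined with \ref{independence of gamma-conjugation}, allows me to pass to a subsequence and conjugate by $\Gamma$ so that every $\Pbf_n$ is contained in a fixed maximal $\TW$-special subdatum $(\Pbf_\maxx, Y_\maxx) = (\Ubf_\maxx, \Vbf_\maxx) \rtimes (w \Gbf_\maxx w^\inv, w X_\maxx)$; in particular $\Hbf \subset \Pbf_\maxx^\der$.

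It remains to realize $\Hbf$ as the derived group of some $\TW$-special subdatum. Writing $\Pbf_n = \Wbf_n \rtimes (w \Gbf_n w^\inv)$ with each $\Gbf_n$ of connected center $\Tbf$, let $\Hbf_\Gbf \subset \Gbf_\maxx^\der$ be the image of $\Hbf$ under the projection $\Pbf_\maxx^\der \to \Gbf_\maxx^\der$, and let $\Wbf_\infty \subset \Wbf_\maxx$ be its unipotent radical, decomposing as a central extension of $\Vbf_\infty \subset \Vbf_\maxx$ by $\Ubf_\infty \subset \Ubf_\maxx$ compatible with $\psi$. Set $\Gbf_\infty = \Tbf \cdot \Hbf_\Gbf$ inside $\Gbf_\maxx$: its connected center is $\Tbf$ since each $\Gbf_n$ has connected center $\Tbf$, and the minimality condition (e) of \ref{fibred mixed Shimura data}, applied to the $\Gbf_\infty(\Rbb)^+$-orbit in $X_\maxx$ of any Hodge cocharacter from a $\Gbf_n$-subdatum, upgrades $(\Gbf_\infty, X_\infty)$ to a pure Shimura subdatum. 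Assembling $(\Ubf_\infty, \Vbf_\infty) \rtimes (w \Gbf_\infty w^\inv, w X_\infty)$ produces a $\TW$-special subdatum whose derived group is $\Hbf$, and the recovery lemma \ref{recovery} then identifies $\nu$ as its canonical measure, so $\nu \in \Pscr_\TW(\Omega)$.

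The main obstacle I anticipate is the verification of the last paragraph: that $\Hbf_\Gbf$ stabilises $\Ubf_\infty$ and $\Vbf_\infty$, that $\psi(\Vbf_\infty \times \Vbf_\infty) \subset \Ubf_\infty$, and most delicately that the minimality condition (e) really produces a pure Shimura subdatum $(\Gbf_\infty, X_\infty)$ of the prescribed type. The stability and $\psi$-compatibility statements should pass to the limit because the $\Gbf_n$-stable subrepresentations $\Wbf_n \subset \Wbf_\maxx$ together with the inclusion $\psi(\Vbf_n \times \Vbf_n) \subset \Ubf_n$ persist upon forming the smallest $\Qbb$-subgroup of $\Pbf_\maxx$ containing all of them, while the Mumford--Tate step is the direct mixed analogue of the argument carried out in \cite{ullmo yafaev} for the pure case.
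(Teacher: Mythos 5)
Your lattice-space argument follows essentially the paper's route: reduce to Mozes--Shah for $\Pbf^\der$ (type $\Hscr$ by \ref{commutator}), then show $\Pscr_\TW(\Omega)$ is closed in $\Pscr_h(\Omega)$ by realizing the limit group as the derived group of a subdatum of the form $\Wbf'\rtimes w\Tbf\Hbf_\Gbf w^\inv$; like the paper, you defer the delicate verification (stability of the unipotent part, $\psi$-compatibility, and the Mumford--Tate/minimality step producing an actual $\TW$-special subdatum) to the analogue of \cite{chen kuga} 4.1--4.5 and \cite{ullmo yafaev}. One small caution there: conjugating by $\Gamma$ as licensed by \ref{independence of gamma-conjugation} preserves the special subvariety but not the property of being $(\Tbf,w)$-special for the \emph{fixed} pair, so you should instead use that every $\TW$-special subdatum sits inside one of the finitely many maximal ones and pass to a subsequence by pigeonhole; this detour is in any case not needed for the limit construction.

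The genuine gap is your reduction of the S-space case to the lattice case. There is no single orbit map $\kappa:\Omega\to\Mscr$ carrying every $\TW$-special lattice subspace onto the corresponding $\TW$-special S-subspace: by \ref{canonical measures on s-spaces}(3) the identity $\kappa_{y*}\nu_{\Omega'}=\mu_{\Mscr'}$ requires the base point $y$ to lie in the real part $\Yscr'^+$ of the subdatum defining $\Mscr'$, and distinct $\TW$-special subdata have distinct real parts. For a fixed $y$, the pushforward $\kappa_{y*}\nu_{\Omega'}$ is the homogeneous measure on $\wp_\Gamma\bigl(\Pbf'^\der(\Rbb)^+y\bigr)$, which in general is neither the S-subspace attached to $(\Pbf',Y')$ nor even a special S-subspace, so compactness and support convergence do not transfer by continuity of a single pushforward. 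This is exactly why the paper's proof for S-subspaces introduces a compact set $K_\TW\subset\Yscr^+$ meeting the real part of \emph{every} $\TW$-special subdatum, writes $\mu_n=\kappa_{y_n*}\nu_n$ with $y_n\in K_\TW$, and extracts a convergent subsequence of base points $y_n\to y$ alongside $\nu_n\to\nu$ before concluding $\mu_n\to\kappa_{y*}\nu$. The existence of such a compact set (the mixed analogue of the Clozel--Ullmo/Ullmo--Yafaev statement that all $\Tbf$-special subvarieties meet a fixed compact region) is a substantive input that your proposal omits entirely; without it, or some substitute controlling the base points, the compactness of $\Pscr_\TW(\Mscr)$ does not follow from that of $\Pscr_\TW(\Omega)$.
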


\begin{proof}[proof for  lattice subspaces] The proof is completely parallel to the Kuga case treated in \cite{chen kuga} Section 4, so we only outline the bounded mixed case.

 We have the lattice space $\Omega=\Gamma^\dagger\bsh\Pbf^\der(\Rbb)^+$. From \ref{commutator} we see that  $\Pscr_\TW(\Omega)$ is a subset of $\Pscr_h(\Omega)$, and it suffices to show that $\Pscr_\TW(\Omega)$ is closed in $\Pscr_h(\Omega)$ for the weak topology.

We thus take a convergent sequence $\nu_n$ in $\Pscr_h(\Omega)$ of limit $\nu$, such that $\nu_n\in\Pscr_\TW(\Omega)$ for all $n$, and we assume for simplicity that $\supp\nu_n\subset\supp\nu$ for all $n$. The $\nu_n$'s are associated to $\TW$-special subdata $(\Pbf_n,Y_n)=\Wbf_n\rtimes(w\Gbf_nw^\inv, wX_n)$. Their supports are $\Omega_n=\wp_\Gamma\bsh\Pbf_n^\der(\Rbb)^+$, with $\Pbf_n^\der=\Wbf_n\rtimes w\Gbf_n^\der w^\inv$.

The limit $\nu$ is associated to some $\Qbb$-subgroup $\Pbf_\nu$ of type $\Hscr$, written as $\Pbf_\nu$ with unipotent radical $\Wbf_\nu$. 

The commutativity of $\Vbf$ in \cite{chen kuga} 4.1 - 4.5 is not used, and the arguments work for general unipotent radical of mixed Shimura data.

Hence the union $\bigcup_n\Pbf^\der_n$ generates a $\Qbb$-subgroup of type $\Hscr$, of the form $\Wbf'\rtimes\Hbf'$, whose associated lattice subspace supports $\nu$. We then have

\begin{itemize}
 \item  $\Wbf'$ is actually a unipotent $\Qbb$-subgroup of $\Wbf$ generated by $\bigcup_n\Wbf_n$; it is a central extension of $\Vbf'$ by $\Ubf'$ for $\Ubf'$ generated by $\bigcup_n\Ubf_n$ and $\Vbf'$ by $\bigcup_n\Vbf_n$; $\Ubf'$, $\Vbf'$, and $\Wbf'$ are stable under $w\Tbf w^\inv$.
 
 \item $\Hbf'$ is a connected semi-simple $\Qbb$-group, whose image $\pi(\Hbf')$ in $\Gbf$ under the natural projection is generated by $\bigcup_n\Gbf_n^\der$; in particular the image is centralized by $\Tbf$.
 \item  $\Wbf'\rtimes\Hbf'$ contains the Zariski closure of $\bigcup_n w\Gbf_n^\der w^\inv$, which is $w\pi(\Hbf') w^\inv$, a connected semi-simple $\Qbb$-group centralized by $w\Tbf w^\inv$.
     
  \item $\Pbf':=\Wbf'\rtimes w\Tbf\pi(\Hbf')w^\inv$ defines some $\TW$-special subdatum $(\Pbf',Y')$, whose associated $\TW$-special canonical measure on $\Omega$ equals $\nu=\lim_n\nu_n$.
\end{itemize}
Hence $\Pscr_\TW(\Omega)$ is closed.
\end{proof}

\begin{proof}[proof for S-subspaces]
Again the commutativity of $\Vbf$ is not used in \cite{chen kuga} Section 5, and we merely outline the main arguments. Note that we have assumed \ref{level of product type}, so $\Mscr=\Gamma\bsh \Yscr^+\ra S=\Gamma_\Gbf\bsh X^+$ is a fibration, whose fibers are torsors by compact real tori over complex abelian varieties.

\begin{itemize}
  
  \item There exists a compact subset $K\TW$ of $\Yscr^+$ such that if $\Mscr'\subset |Mscr$ is a $\TW$-special S-subspace, then $\Mscr=\wp_\Gamma(\Yscr'^+)$ is given by some connected $\TW$-special subdatum $(\Pbf',Y';Y'^+)$, with real part $\Yscr'^+$ meeting $K\TW$ non-trivially.
  
  \item The set $\Pscr_\TW(\Mscr)$ is compact for the weak topology: if $\mu_n$ is a sequence of $\TW$-special canonical measures on $\Mscr$ defined by $(\Pbf_n,Y_n;Y_n^+)$, given as $\mu_n=\kappa_{y_n*}\nu_n$ for $y_n\in K\TW$ and $\nu_n$ the canonical measure associated to $\Omega_n=\Gamma_n^\dagger\bsh \Pbf_n^\der(\Rbb)^+$, then up to restriction to subsequences, we may assume that $y_n$ converges to some $y\in K\TW$ and $(\nu_n)$ converges to some $\nu$ associated to a $\TW$-special subdatum $(\Pbf',Y;Y'^+)$ with $y\in\Yscr'^+\cap K\TW$. Thus $\mu_n$ congerges to $\mu=\kappa_{y*}\nu$.

\end{itemize}

The property of support convergence holds similarly.
\end{proof}

\begin{corollary}[bounded equidistribition]\label{bounded equidistribution} (1) For $B$ a finite bounding set, $\Sscr\in\{\Omega,\Mscr\}$, we have $\Pscr_B(\Sscr)=\coprod_{\TW\in B}\Pscr_\TW(\Sscr)$. In particular, $\Pscr_B(\Sscr)$ is compact for the weak topology, and the support convergence holds in it.

(2) For $\Sscr=\Omega$ (resp. $\Sscr=\Mscr$), the closure of a sequence of special lattice subspaces (resp. of special S-subspaces) bounded by $B$ for the analytic topology is a finite union of special lattice subspaces (resp. of special S-subspaces) bounded by $B$.

\end{corollary}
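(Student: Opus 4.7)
For Part (1), the equality $\Pscr_B(\Sscr)=\bigcup_{\TW\in B}\Pscr_\TW(\Sscr)$ is immediate from the definition of $B$-boundedness, and the recovery Lemma \ref{recovery} makes this union disjoint: a canonical probability measure in $\Pscr_B(\Sscr)$ is determined by its support, which in turn uniquely recovers the underlying subdatum and hence the pair $\TW\in B$ (up to the $\Gamma$-equivalences of Remark \ref{independence of gamma-conjugation}). Compactness and support convergence for $\Pscr_B(\Sscr)$ then transfer from each $\Pscr_\TW(\Sscr)$ treated in Theorem \ref{equidistribution of tw-special subspaces}: a finite disjoint union of compact sets is compact, and by pigeonhole on the finite set $B$ any convergent sequence in $\Pscr_B(\Sscr)$ admits a subsequence lying entirely in a single $\Pscr_\TW(\Sscr)$, where support convergence holds.

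For Part (2), let $(M_n)$ be a $B$-bounded sequence of special subspaces with associated measures $\mu_n\in\Pscr_B(\Sscr)$ and write $Z=\overline{\bigcup_n M_n}$ for the analytic closure. By the lemma preceding Theorem \ref{equidistribution of tw-special subspaces}, combined with the finiteness of $B$, there are only finitely many maximal $B$-bounded special subspaces $N_1,\ldots,N_r$ in $\Sscr$; every $M_n$ (being irreducible) is contained in some $N_i$, so $Z\subset N_1\cup\cdots\cup N_r$. As a closed subset of a finite union of irreducible special subspaces, $Z$ decomposes into finitely many irreducible components $Z=Z_1\cup\cdots\cup Z_s$, and by irreducibility each $Z_j$ is contained in some $N_i$ and satisfies $Z_j=\overline{\bigcup\{M_n:M_n\subset Z_j\}}$.

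It remains to identify each $Z_j$ as a $B$-bounded special subspace. Set $A_j=\{n:M_n\subset Z_j\}$. If $A_j$ is finite then by irreducibility exactly one $M_n$ with $n\in A_j$ equals $Z_j$, which is already a $B$-bounded special subspace. Otherwise, extract from $(\mu_n)_{n\in A_j}$ a weakly convergent subsequence $\mu_{n_k}\to\mu^{(j)}\in\Pscr_B(\Sscr)$ using Part (1); the support $\supp\mu^{(j)}$ is a $B$-bounded special subspace contained in $Z_j$, and by support convergence $\bigcup_k M_{n_k}$ is analytically dense in $\supp\mu^{(j)}$. Iterating this extraction on the indices of $A_j$ whose $M_n$ is not yet captured, and invoking the finiteness of maximal $B$-bounded special subspaces inside the ambient $N_i$ to terminate the iteration, one covers $\bigcup_{n\in A_j}M_n$ by finitely many $B$-bounded special subspaces whose union is closed and dense in $Z_j$, hence equal to $Z_j$; irreducibility of $Z_j$ then forces it to equal one of these, completing the proof.

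The main obstacle is the Noetherian bookkeeping that keeps both the irreducible decomposition of $Z$ and the inner iteration finite; this relies throughout on the finiteness of maximal $\TW$-special subdata, applied once to $\Sscr$ itself and then again within each ambient $N_i$ during the iteration.
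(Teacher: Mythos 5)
Part (1) of your proposal is fine and is essentially the paper's own argument: finite union of the compact sets $\Pscr_\TW(\Sscr)$, plus a pigeonhole extraction into a single $\Pscr_\TW(\Sscr)$ to transfer support convergence (the appeal to Lemma \ref{recovery} for disjointness is a harmless extra).

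Part (2), however, has a genuine gap, and it sits exactly where the content of the statement lies. You write that, being a closed subset of the finite union $N_1\cup\cdots\cup N_r$ of maximal $B$-bounded special subspaces, the analytic closure $Z=\overline{\bigcup_n M_n}$ ``decomposes into finitely many irreducible components.'' That is false for the archimedean topology: an analytically closed subset of an irreducible variety need not be a finite union of irreducible closed sets (think of the closure of an infinite discrete set of points, or of an infinite union of subtori). Ruling out precisely this behaviour is what Corollary \ref{bounded equidistribution}(2) asserts, so the decomposition $Z=Z_1\cup\cdots\cup Z_s$ cannot be taken as a starting point; it would only be available for the Zariski closure, which is the subsequent Corollary \ref{bounded Andr\'e-Oort}, deduced from (2), not an input to it. The second weak point is the termination of your iterated extraction: the supports $\supp\mu^{(j)}$ produced by successive limits are $B$-bounded special subspaces but in general not \emph{maximal} ones, and inside a fixed $N_i$ there are infinitely many non-maximal $B$-bounded special subspaces; so the finiteness of maximal $\TW$-special subdata does not bound the number of steps, and nothing in your argument prevents an infinite strictly ``sideways'' sequence of extracted supports. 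A correct route (the one implicit in the paper's appeal to convergence of measures plus support convergence, following Clozel--Ullmo): consider the family $\mathcal{E}$ of $B$-bounded special subspaces $S$ such that $\bigcup\{M_n: M_n\subset S\}$ is analytically dense in $S$; every $M_n$ lies in $\mathcal{E}$, strict inclusions in $\mathcal{E}$ drop the dimension, so every element sits below a maximal one; and the maximal elements of $\mathcal{E}$ are finite in number, because from infinitely many distinct maximal $S^{(i)}$ one extracts, using compactness of $\Pscr_B(\Sscr)$ and support convergence applied to \emph{their} canonical measures, a larger element of $\mathcal{E}$ containing infinitely many of them, contradicting maximality. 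Then $Z$ is exactly the (finite, closed) union of the maximal elements of $\mathcal{E}$. Your extraction idea is the right ingredient, but it must be applied a second time, to the produced supports themselves, to obtain termination; as written the proof does not close.
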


\begin{proof}
  
  (1) This is clear because $\Pscr_B(\Sscr)$ is a finite union of compact subsets of the set of Radon measures on  $\Sscr$. The property of support convergence holds because if a sequence $(\mu_n)$ converges to $\mu$, then it contains a subsequence that converges into $\Pscr_\TW(\Sscr)$  for some $\TW\in B$. Hence the $\nu\in\Pscr_\TW(\Sscr)$. All the convergent subsequence of $(\mu_n)$ are of the same limit, so it is not possible to have an infinite subsequence lying outside $\Pscr_\TW(\Omega)$. Hence the sequence itself is in $\Pscr_\TW(\Sscr)$.
  
  (2) This is clear using the convergence of measures and the property of support convergence.
\end{proof}

\begin{corollary}[bounded Andr\'e-Oort]\label{bounded Andr\'e-Oort} Let $M$ be a connected Shimura variety defined by $(\Pbf,Y)=(\Ubf,\Vbf)\rtimes(\Gbf,X)$, with $B$ a finite bounding set. Let $(M_n)$ be a sequence of special subvarieties bounded by $B$. Then the Zariski closure of $\bigcup_nM_n$ is a finite union of special subvarieties bounded by $B$.

\end{corollary}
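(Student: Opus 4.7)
The plan is to transport the sequence $(M_n)$ of $B$-bounded special subvarieties to the associated sequence $(\Mscr_n)$ of $B$-bounded special S-subspaces in the S-space $\Mscr$ attached to $M$, apply the bounded equidistribution on S-subspaces from \ref{bounded equidistribution}(2), and then translate the resulting analytic statement back to a Zariski statement using the density of S-subspaces in their ambient special subvarieties.

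More precisely, for each $n$ let $(\Pbf_n,Y_n;Y_n^+)$ be the connected $\TW$-special subdatum cutting out $M_n$ (for some $\TW\in B$), and let $\Mscr_n=\wp_\Gamma(\Yscr_n^+)\subset\Mscr$ be the corresponding $B$-bounded special S-subspace. By the density lemma following \ref{remarks on s-spaces} (applied to each special subdatum), $\Mscr_n$ is Zariski-dense in $M_n$, and similarly every $B$-bounded special S-subspace is Zariski-dense in the unique $B$-bounded special subvariety it generates. Applying \ref{bounded equidistribution}(2) to $(\Mscr_n)$, the analytic closure of $\bigcup_n\Mscr_n$ inside $\Mscr$ is a finite union $\Mscr_1'\cup\cdots\cup\Mscr_k'$ of $B$-bounded special S-subspaces; let $M_1',\dots,M_k'$ denote the corresponding $B$-bounded special subvarieties of $M$, so that $M_i'$ is the Zariski closure of $\Mscr_i'$.

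It remains to identify the Zariski closure of $\bigcup_nM_n$ with $M_1'\cup\cdots\cup M_k'$. For the inclusion ``$\subseteq$'', note that $\Mscr_n$ lies in the analytic closure $\bigcup_i\Mscr_i'\subseteq\bigcup_iM_i'$; since $\bigcup_iM_i'$ is Zariski-closed and contains each $\Mscr_n$, it contains the Zariski closure of $\Mscr_n$, which by density is $M_n$, and hence it contains the Zariski closure of $\bigcup_nM_n$. For the reverse inclusion, each $\Mscr_i'$ is contained in the analytic closure of $\bigcup_n\Mscr_n$, which is in turn contained in the Zariski closure of $\bigcup_n\Mscr_n=$ Zariski closure of $\bigcup_nM_n$ (again by Zariski-density of each $\Mscr_n$ in $M_n$); taking Zariski closure on the left gives $M_i'\subseteq\overline{\bigcup_nM_n}^{\mathrm{Zar}}$.

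The main point where care is required, and the only place beyond a direct application of \ref{bounded equidistribution}(2), is the passage between the analytic and Zariski topologies on $\Mscr\subset M$. This rests on the density lemma after \ref{remarks on s-spaces} and the bijective correspondence between $\TW$-special subdata, their $\TW$-special S-subspaces, and their $\TW$-special subvarieties; once these are in place the argument is formal. In the Kuga case ($\Ubf=0$) the S-space equals the mixed Shimura variety and the Zariski/analytic distinction collapses, so the only genuinely mixed content of the proof is the density of $\Mscr$ in $M$, already established in Section~1.
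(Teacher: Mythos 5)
Your proposal is correct and follows essentially the same route as the paper: the paper's own proof is a two-line remark that the claim follows from \ref{bounded equidistribution}(2) because the analytic closure is finer than the Zariski closure and the S-subspaces are Zariski dense in the corresponding special subvarieties, which is exactly the argument you spell out in detail. Your version simply makes the back-and-forth between analytic and Zariski closures explicit, which is a faithful elaboration rather than a different approach.
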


\begin{proof}
  This is clear because analytic closure is finer than Zariski closure, and that the S-subspaces are Zariski dense in the corresponding special subvarieties.
\end{proof}

\begin{remark}
  In \cite{ratazzi ullmo}, Ratazzi and Ullmo has shown, only using tools from harmonic analysis, that the analytic  closure of a sequence of compact subtori in a given compact torus $\Rbb^d/\Zbb^d$ remains a compact tori. Our proof of bounded equidistribution can be viewed as the analogue of this result in the setting of special subvarieties of mixed Shimura subvarieties, using ergodic-theoretic tools only.
\end{remark}

\end{document}

\section{lower bound of the Galois orbit of a special subvariety}

In the pure case, Ullmo and Yafaev proved the following lower bound of Galois orbits of special subvarieties in a pure Shimura variety:

\begin{theorem}[lower bound in the pure case, cf. \cite{ullmo yafaev} Theorem 2.19]\label{lower bound in the pure case} Let $S=M_K(\Gbf,X)$ be a pure Shimura variety with reflex field $E$, with $K\Gbf(\adele)$ a level of  product type. Assume the GRH for CM fields, and fix an integer $N>0$. Then for $S'\subset S$ a $\Tbf$-special subvariety, we have $$\#\Gal(\Qac/E)\cdot S'\geq c_N\cdot D_N(\Tbf)\cdot \prod_{p\in \Delta(\Tbf,K)} \maxx\{1, I(\Tbf,K_p)\}$$ with
\begin{itemize}

\item $D_N(\Tbf)=(\log D(\Tbf))^N$, where $D(\Tbf)$ is the absolute discriminant of the splitting field of the $\Qbb$-torus $\Tbf$;

\item $\Delta(\Tbf,K)$ is the set of rational primes $p$ such that $K_{\Tbf,p}\subsetneq K_{\Tbf,p}^\maxx$, where \begin{itemize}
\item $K_{\Tbf,p}=\Tbf(\Qbbp)\cap K_p$,
\item $K_{\Tbf,p}^\maxx$ the unique maximal \cosg of $\Tbf(\Qbbp)$

\end{itemize} $\Delta(\Tbf,K)$  is finite, i.e. $K_{\Tbf,p}=K_{\Tbf,p}^\maxx$ for all but finitely many $p$'s.

\item $I(\Tbf,K_p)=b[K_{\Tbf,p}^\maxx :K_{\Tbf,p}]$


\item and $c_N,b\in\Rbb_{>0}$ are constants independent of $K$, $\Tbf$.
\end{itemize}
\end{theorem}

\begin{remark}
(1)  The estimation depends on an embedding of $(\Gbf,X)$ into some ambient pure Shimura datum $(\Gfrak,\Xfrak)$, and a faithful algebraic representation $\rho:\Gfrak\ra\GL_{n\Qbb}$, and they determine the constants $c_N, b$.

  The independence of $K$ was not mentioned explicitly in \cite{ullmo yafaev}, but one can verify through their arguments that $c_N$ and $b$ are determined by $(\Gfrak,\Xfrak)$ and $\rho$. $c_N$ does depend on the prescribed integer $N$, but any fixed $N$ will suffice.

(2) The quantity $I_1(\Tbf)$ only depends on $\Tbf$, while $I_2(\Tbf,K)$ describe the position of $\Cbb(\adele)$ relative to $K=\prod_pK_p$: the set $\Delta(\Tbf,K)$ consists of rational primes where $\Tbf$ does not have good reduction comparing to the ''integral structure provided by $K$''. See \cite{yafaev duke} for details.

(3) The estimation in \cite{ullmo yafaev} was formulated using intersection degrees against the ample line bundle of top degree automorphic forms on $S=M_K(\Gbf,X)$. Actually the intersection degree of a single (connected) special subvariety only contributes as a real number greater than 1 in the lower bound. The formulation is used in further study of unbounded orbits in \cite{klingler yafaev}.

\end{remark}

We can thus consider the lower bound of the Galois orbits of pure special subvariety in a given mixed Shimura variety.

\begin{theorem}[pure special subvarieties in a mixed Shimura variety]\label{pure special subvarieties in a mixed Shimura variety} Let $(\Pbf,Y)=(\Ubf,\Vbf)\rtimes(\Gbf,X)$ be a mixed Shimura subdatum, defining a mixed Shimura variety $M$ at a level $K$ of  strong product type. Write $E$ for the reflex field of $(\Pbf,Y)$, and $\pi$ for the natural projection $M\ra S=M_{K_\Gbf}(\Gbf,X)$ wite $\iota(0)$ the zero section.

Let $M'$ be a pure special subvariety of $M$ defined by a subdatum of the form $(w\Gbf'w^\inv,wX')$ for some pure subdatum $(\Gbf',X')\subset(\Gbf,X)$ and $w\in\Wbf(\Qbb)$. Then we have the following lower bound assuming the GRH for CM fields:
$$\Gal(\Qac/E)\cdot M'\geq c_NI_1(\Tbf)\maxx\{1, I_2(\Tbf, K_\Gbf(w))\}$$ where \begin{itemize}
  \item $\Tbf$ is the connected center of $\Gbf'$, and $I_1(\Tbf)=(\log D(\Tbf))^N$ is as in \ref{lower bound in the pure case};
  \item $K_\Gbf(w)$ is the subgroup $$\{g\in K_\Gbf:wgw^\inv g^\inv\in K_\Wbf\}$$ and $I_2(\Tbf,K_\Gbf(w))$ is as in \ref{lower bound in the pure case}.
\end{itemize}
\end{theorem}

Before entering the proof, we first justify some notions in the statement of the theorem:

\begin{lemma}
  In the statement \ref{pure special subvarieties in a mixed Shimura variety} above, put further $K_\Gbf(w)_p:=\Gbf(\Qbbp)\cap K_\Gbf(w)_p$. Then $K_\Gbf(w)_p$ is a \cosg in $K_{\Gbf,p}$, and the inclusion $K_\Gbf(w)_p\subset K_{\Gbf,p}$ is an equality for all but finitely many rational primes $p$'s. In particular, the group $K_\Gbf(w)$ is a \cosg in $K_\Gbf$ of product type, i.e. $K_\Gbf(w)=\prod_pK_\Gbf(w)_p$.
\end{lemma}

\begin{lemma}[reflex field]\label{reflex field} $(\Pbf,Y)=\Wbf\rtimes(\Gbf,X)$ has the same reflex field as $(\Gbf,X)$ does, and the action of $\Gal(\Qac/E)$ on $M_{K^w_\Gbf}(w\Gbf w^\inv, wX)$ is identified with its action on $M_{K_\Gbf(w)}(\Gbf,X)$ where $K^w_\Gbf:=w\Gbf(\adele)w^\inv\cap K_\Wbf\rtimes K_\Gbf$.

\end{lemma}

\begin{proof}[proof of \ref{pure special subvarieties in a mixed Shimura variety}]

\end{proof}

\begin{corollary}[lower bound in the mixed case] Let $M\ra S$ be a fibred mixed Shimura variety associated to $(\Pbf,Y)=\Wbf\rtimes(\Gbf,X)$ at product level $K$. Let $M'$ be a special subvariety defined by a subdatum of the form $(\Pbf',Y')=\Wbf'\rtimes(w\Gbf'w^\inv, wX)$, with $\Tbf$ the connected center of $\Gbf'$. We then have $$\#\Gal(\Qac/E)\cdot M'\geq c_N I_1(\Tbf)\maxx\{1,\inf_wI_2(\Tbf,K_\Gbf(w))\}$$ assuming the GRH for CM fields, where $\inf_w$ stands for the inferium of all possible choices of $w$ for the subdatum $(\Pbf',Y')$.
\end{corollary}

\begin{remark}[the infinum of $I_2(\Tbf,K_\Gbf(w))$]\label{inferium of defects}

\end{remark}

In \cite{ullmo yafaev} is obtained a further characterization of a sequence of special subvarieties with uniform bounded Galois orbits:

\begin{theorem}[bounded sequence of special subvarieties]\label{bounded sequence of special subvarieties} Assume the GRH for CM fields. Fix $N>0$ an integer and

\end{theorem}

\section{bounded sequences and bounded Galois orbits}





\end{document}